\documentclass[12pt]{article}

\usepackage[T1]{fontenc}
\usepackage[utf8]{inputenc}
\usepackage[margin=2.54cm]{geometry}
\usepackage{microtype}
\usepackage{comment}
\usepackage[tbtags]{amsmath}
\usepackage{amssymb,amsthm,mathtools,mathrsfs}
\usepackage{thmtools}
\usepackage{thm-restate}
\numberwithin{equation}{section}
\allowdisplaybreaks

\usepackage{graphicx}
\usepackage{booktabs}
\usepackage{array,multirow}
\usepackage{float}
\usepackage{subcaption}
\usepackage[export]{adjustbox}
\usepackage{enumitem}
\usepackage{placeins}
\usepackage{sectsty}
\usepackage{xcolor}
\usepackage{hyperref}
\usepackage{orcidlink}
\definecolor{mycitegreen}{HTML}{00A99D}
\hypersetup{
	colorlinks=true,
	linkcolor=blue,
	citecolor={red!90!black},
	urlcolor=mycitegreen
}
\usepackage[noabbrev]{cleveref}

\usepackage{listings}
\definecolor{codebg}{RGB}{248,248,248}
\definecolor{codeframe}{RGB}{220,220,220}
\definecolor{codecomment}{RGB}{0,96,96}
\definecolor{codestring}{RGB}{128,0,0}
\definecolor{codekeyword}{RGB}{40,40,180}

\lstdefinestyle{pythonstyle}{
	language=Python,
	basicstyle=\ttfamily\small,
	numbers=left,
	numberstyle=\scriptsize\color{gray},
	stepnumber=1,
	numbersep=8pt,
	breaklines=true,
	showstringspaces=false,
	columns=fullflexible,
	frame=single,
	rulecolor=\color{codeframe},
	backgroundcolor=\color{codebg},
	keywordstyle=\color{codekeyword},
	commentstyle=\itshape\color{codecomment},
	stringstyle=\color{codestring}
}
\sectionfont{\centering}

\theoremstyle{plain}
\newtheorem{theorem}{Theorem}[section]

\newtheorem{lemma}[theorem]{Lemma}
\newtheorem{proposition}[theorem]{Proposition}
\newtheorem{corollary}[theorem]{Corollary}

\theoremstyle{definition}
\newtheorem{definition}{Definition}[section]

\newtheorem{question}[theorem]{Question}

\theoremstyle{remark}

\crefname{theorem}{Theorem}{Theorems}
\crefname{lemma}{Lemma}{Lemmas}
\crefname{proposition}{Proposition}{Propositions}
\crefname{corollary}{Corollary}{Corollaries}
\crefname{conjecture}{Conjecture}{Conjectures}
\crefname{problem}{Problem}{Problems}
\crefname{claim}{Claim}{Claims}
\crefname{observation}{Observation}{Observations}
\crefname{fact}{Fact}{Facts}
\crefname{definition}{Definition}{Definitions}
\crefname{construction}{Construction}{Constructions}
\crefname{question}{Question}{Questions}
\crefname{setup}{Setup}{Setups}
\crefname{example}{Example}{Examples}
\crefname{remark}{Remark}{Remarks}
\crefname{section}{Section}{Sections}
\Crefname{section}{Section}{Sections}
\crefname{subsection}{Subsection}{Subsections}
\Crefname{subsection}{Subsection}{Subsections}
\crefname{subsubsection}{Subsubsection}{Subsubsections}
\Crefname{subsubsection}{Subsubsection}{Subsubsections}
\crefrangelabelformat{lemma}{#3#1#4--#5#2#6}

\makeatletter
\newcommand{\labelinthm}[1]{%
	\label{temp#1}%
	\protected@write\@auxout{}{%
		\string\newlabel{#1}{{\emph{\ref{temp#1}}}{\thepage}{\emph{\ref{temp#1}}}{temp#1}{}}%
	}%
}
\makeatother

\title{Ramsey-Tur\'an Type Problem for Perfect Transitive Triangle Tilings in Digraphs}
\author{
	Zhimin Wang~\orcidlink{0009-0009-6772-6061}\thanks{School of Mathematics, Shandong University, Jinan 250100, China. Supported by the National Natural Science Foundation of China (No.~12571373).}
	\and
	Zhilan Wang\footnotemark[1]
	\and
	Jin Yan\footnotemark[1] \thanks{Corresponding author. Email: \href{mailto:yanj@sdu.edu.cn}{\texttt{yanj@sdu.edu.cn}}}
}
\date{}

\begin{document}
	
	\maketitle
	\begin{abstract}
		\noindent 
		The classical Corr\'adi-Hajnal theorem states that for any multiple $n$ of $3$, if $G$ is a graph with $n$ vertices and  $\delta(G) \geq 2n/3$, then $G$ can be partitioned into $n/3$ vertex-disjoint copies of the triangle [\emph{Acta Math. Acad. Sci. Hung.}, 14:423-439, 1964]. Balogh, Molla and Sharifzadeh obtained a smaller lower bound by adding the independence number condition [\emph{Random Struct. Algorithms}, 49:669-693, 2016].
	
	In this paper, we study perfect tilings in digraphs subject to conditions on the independence number and the degree. The independence number, $\alpha(D)$, of $D$ is the maximum integer $k$ such that $D$ has an independent set of cardinality $k$.
	We show that if $D$ is an $n$-vertex digraph with $\alpha(D)\leq o(1)n$ and $\delta(D) \geq (1+o(1))n$, then $D$ has a perfect $T_3$-tiling, where $T_3$ denotes a transitive triangle. This minimum degree condition is asymptotically best possible. Moreover, our result implies the theorem of Balogh, Molla, and Sharifzadeh concerning perfect triangle tilings.
	\end{abstract}
	\noindent{\bf Keywords:} {digraph; transitive triangle; perfect tiling; independence number}\\
	\noindent{\bf Mathematics Subject Classifications:}\quad  05B45, 05C07, 05C20, 05C69 
	\section{Introduction}
		Given an $n$-vertex (directed) graph $G$ and an $h$-vertex (directed) graph $H$, 
	the $minimum$ $degree$ $\delta (G)$ of $G$ is the minimum number of edges (arcs) incident to a vertex in $V(G)$ and an $H$-$tiling$ is a collection of vertex disjoint copies of $H$ in $G$. A $perfect$ $H$-$tiling$ ($H$-$factor$) is an $H$-tiling that covers all of the vertices in $V(G)$. 
	The problem of determining whether a (directed) graph $G$ contains a perfect $H$-tiling is NP-complete when $H$ is connected and $|V(H)|\ge 3$, see \cite{hell}.
	A well-known result in this topic is the Hajnal-Szemer\'edi theorem \cite{Hajnal}, which states that any $n$-vertex graph $G$ with $\delta (G) \geq (1-1/r)n$ contains a perfect $K_r$-tiling, where $r$ divides $n$. 
	This minimum degree condition is tight. 
	When $r=3$, the proof of this theorem was given by Corradi and Hajnal~\cite{corradi}. 
	Over the past period of time, there has been much work on generalizing the Hajnal-Szemer\'edi theorem, see \cite{Keevash, kier, kuhn, kuhn2, Piguet}. 
	
	A set $U$ of vertices in a (directed) graph $G$ is \emph{independent} if the (directed) graph induced by $U$ has no edges (arcs).
	The \emph{independence number}, $\alpha(G)$, of $G$ is the maximum integer $k$ such that $G$ has an independent set of cardinality $k$.
	Erd\H{o}s and S\'os \cite{Erdos} studied the maximum number of edges in an $n$-vertex $K_r$-free graph with independence number $\alpha(G)=o(1)n$. This type of problem is referred to as a Ramsey-Tur\'an problem, and has been studied extensively, see  \cite{BaloghL,ErdosHS,ErdosHSS,Mubayi,Simonovits}.
	More generally, imposing an independence number condition on an original problem gives rise to a Ramsey-Tur\'an type variant of that problem; see \cite{Balogh, BaloghFT, knier21, pi}.
	The following problem is a Ramsey-Tur\'an type variant of the Hajnal-Szemerédi theorem, proposed by Balogh, Molla and Sharifzadeh in \cite{Balogh}.
	
	\begin{question}\label{q1.1}\cite{Balogh}
		Let $G$ be an $n$-vertex graph with $\alpha(G)=o(1)n$. What is the minimum degree condition on $G$ that guarantees a perfect $K_r$-tiling in $G$ for $r \geq 3$?
	\end{question}
	
	In the same article, the authors answered Question \ref{q1.1} for $r=3$ on sufficiently large graphs.
	\begin{theorem}\cite{Balogh}\label{thm-Balogh}
		For every $\varepsilon>0$, there exist $\gamma>0$ and $n_0$ such that the following holds. For every $n$-vertex graph $G$ with $n>n_0$, if $\delta(G) \geq (1/2+\varepsilon)n$ and $\alpha(G) \leq \gamma n$, then $G$ has a perfect $K_3$-tiling.
	\end{theorem}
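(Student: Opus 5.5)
The plan is the absorbing method. First I would build an absorbing set, then use the small independence number to tile almost all of the remaining vertices with triangles, and finally absorb the leftover vertices. Throughout I use one basic observation about $\alpha(G)\le\gamma n$. Any set $S$ with $|S|>\gamma n$ spans an edge. More strongly, any vertex $v$ whose neighbourhood $N(v)$ satisfies $|N(v)|>\gamma n$ lies in a triangle inside any subset of $N(v)$ of size more than $\gamma n$. Hence every vertex lies in many triangles. More generally, for sets $A,B$ with $|N(v)\cap A|\ge \gamma n$, we get triangles $vab$ with $a,b\in N(v)\cap A$.

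\textbf{Step 1 (absorbing set).} I would take an absorbing structure adapted to the degree bound $(1/2+\varepsilon)n$. For a triple $T=\{x,y,z\}$, a $12$-set absorber is a set $A_T$ of a few triangles such that both $G[A_T]$ and $G[A_T\cup T]$ have perfect triangle tilings. Since the degree is only above $n/2$, not above $2n/3$, it is not automatic that every triple has many absorbers. The standard fix is a ``reachability'' argument, in the style of Lo--Markstr\"om. Say $u,v$ are reachable if there are many $(3t-1)$-sets $S$ such that both $G[S\cup\{u\}]$ and $G[S\cup\{v\}]$ have triangle factors. Any two vertices $u,v$ have at least $2\varepsilon n$ common neighbours, and within that set we can find an edge $ab$, giving triangles $uab$ and $vab$. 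Together with $\alpha$ small, this shows that any two vertices are $1$-reachable via $\Omega(n^2)$ pairs $\{a,b\}$. Closedness of reachability then yields absorbers for every triple. A random selection with Chernoff bounds gives a family $\mathcal{A}$ of $o(n)$ disjoint absorbers that can absorb any set of $\le \beta n$ vertices with size divisible by $3$.

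\textbf{Step 2 (almost perfect tiling).} In $G'=G-V(\mathcal{A})$ the minimum degree is still $\ge(1/2+\varepsilon/2)n$. I would find a triangle tiling covering all but $\beta n$ vertices. One route is via regularity: apply the regularity lemma and use $\delta(R)\ge(1/2+\varepsilon/4)|R|$ in the reduced graph $R$, which gives a perfect matching, indeed a fractional structure. Inside a regular pair $(V_i,V_j)$ with small $\alpha$, we can find triangles with two vertices in $V_i$ and one in $V_j$ (an edge inside a neighbourhood in $V_i$), or balanced variants. Alternating these two types tiles the pair almost perfectly. A simpler greedy alternative is as follows. Take a maximum tiling; if more than $\beta n$ vertices $U$ are left uncovered, then $U$ spans no triangle, so $G[U]$ is triangle-free with $\alpha\le\gamma n$. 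Since $\alpha$ is small, every vertex of $U$ then has degree $\le\gamma n$ inside $U$, so it sends $\ge n/2$ edges to the tiles. A counting argument then finds a tile that can be swapped to cover more vertices, contradicting maximality.

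\textbf{Step 3.} Absorb the leftover $\le\beta n$ vertices with $\mathcal{A}$. I expect Step 1, specifically showing that all triples are absorbable at degree just above $n/2$, to be the main obstacle. There, the interplay of $\delta>n/2$, which guarantees common neighbours, and small $\alpha$, which guarantees edges in them, must be exploited carefully.
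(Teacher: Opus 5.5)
\textbf{The gap is in Step 1, at exactly the point you flagged.}

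Your inference ``$|N(u)\cap N(v)|\ge 2\varepsilon n$ and $\alpha(G)\le\gamma n$, hence $\Omega(n^2)$ edges inside $N(u)\cap N(v)$'' is false. Tur\'an's theorem applied to the complement shows that small independence number only forces about $|S|^2/(2\gamma n)$ edges in a set $S$. That is linear in $n$ when $|S|=\Theta(n)$, and a disjoint union of cliques of order $\lceil 1/\gamma\rceil$ shows this is tight.

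More seriously, the conclusion itself fails under the hypotheses: not every pair is reachable, and not every triple has $\Omega(n^{12})$ absorbers. Take the complete tripartite graph with parts $P_1,P_2,P_3$ of size $n/3$, and put a disjoint union of cliques of order $\lceil 1/\gamma\rceil$ inside each part.
\begin{itemize}
\item Then $\delta(G)\ge 2n/3$ and $\alpha(G)\le\gamma n$. Only $O(n/\gamma)$ edges lie inside parts, so all but $O(n^2/\gamma)$ triangles are transversal.
\item Let $u\in P_1$, $v\in P_2$, and let $S$ be a $(3k-1)$-set such that $G[S\cup\{u\}]$ and $G[S\cup\{v\}]$ both have triangle factors. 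The two factors cannot both be transversal: one would force $|S\cap P_1|+1=|S\cap P_3|$, the other $|S\cap P_1|=|S\cap P_3|$.
\item So $S\cup\{u,v\}$ contains an edge inside a part. This happens for only $O(n^{3k-2}/\gamma)$ sets $S$, so $u,v$ are not $(c,k)$-reachable for any fixed $c,k$.
\item Likewise, a triple inside $P_1$ has only $o(n^{12})$ absorbers.
\end{itemize}
No lattice or transferral refinement rescues this, because the only robust triangles are transversal. Absorbing, say, a leftover set $W\subseteq P_1$ needs triangles with an edge inside a part. These exist only because $\alpha(G)\le\gamma n$, there are only $O(n^2/\gamma)$ of them, and no absorbing family built by counting plus random selection can reserve them.

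\textbf{How the paper avoids this.} The statement follows from Corollary~\ref{cor-T3} by replacing every edge with a $2$-cycle; the original argument is due to Balogh, Molla and Sharifzadeh. The proof uses a two-layer structure.
\begin{itemize}
\item The degree condition alone, via the auxiliary bipartite graph between edges and vertices and Lemma~\ref{lem-bi-M}, gives only a partition $\mathcal P$ into boundedly many classes, each internally linked (Lemma~\ref{lem-Pexist}).
\item The absorber $X$ absorbs only balanced collections $Z^I_1,Z^I_2,Z^I_3$ lying in the classes of triples $I$ that span more than $\phi n^3$ triangles.
\item A leftover set $W$ is first absorbed into a random reservoir $\mathcal Z$. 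Each $w\in W$ uses one edge of $N(w)\cap Z^I_j$, which exists because $\alpha(G)\le\gamma n$.
\item The resulting imbalances are repaired by further non-robust triangles of this kind, found along paths in the connected cluster graph $C(\mathcal Z,\beta)$ and inside each triple $Z^I_1Z^I_2Z^I_3$ (Lemma~\ref{lem-absorbable collection Z} and the proof of Lemma~\ref{lem-absorb}).
\end{itemize}

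Your Steps 2 and 3 are fine. In the greedy version, averaging gives a tile with two vertices each having more than $\gamma n+3$ uncovered neighbours, and that tile can be replaced by two triangles. Step 1, however, must be replaced by this partition-plus-reservoir mechanism rather than global reachability.
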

	
	In 2021, Knierim and Su \cite{knier21} resolved Question \ref{q1.1} for all $K_r$. Recently, Chen et al. \cite{Chen} extended the above question to any arbitrary graph $H$.
	For digraphs, the analogous problem is more complicated. Let $\delta^0(D) = \min_{v \in V(D)} \{ d^+(v),\, d^-(v) \}$ be the \emph{minimum semi-degree} of a digraph $D$. 
	Consider the digraph obtained by orienting $K_3$ either as a transitive triangle $T_3$ or as a directed $3$-cycle $C_3$.
	The tiling problems for $T_3$ and $C_3$ in digraphs with minimum (semi-)degree condition have been extensively studied; see \cite{BaloghLo, CzygrinowKM, KeevashS, wang, wangZ, Yuster}.
	It is natural to consider a Ramsey-Tur\'an type variant of the tiling problems for $T_3$ and $C_3$ in digraphs. 
	However, for $C_3$-tiling under minimum degree and independence number conditions, the counterexample in \cite{wang} shows that adding such a condition on the independence number does not improve the bound already given there.
	Hence, we consider only $T_3$-tiling in digraphs under minimum degree and independence number conditions.
	\begin{theorem}\label{thm-T3}
		For every $\varepsilon>0$, there exists $\gamma>0$ and $n_0$ such that the following holds. Let $D$ be a digraph with $n>n_0$ vertices. If $\delta(D) \geq (1+\varepsilon)n$ and $\alpha(D) \leq \gamma n$, then $D$ has a perfect $T_3$-tiling.
	\end{theorem}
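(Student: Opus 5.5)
We follow the absorbing method, as Balogh, Molla and Sharifzadeh did for \cref{thm-Balogh}, adapting each ingredient to digraphs where $\delta(D)$ is the total degree $d^+(v)+d^-(v)\le 2(n-1)$. Throughout we assume $3\mid n$, which a perfect tiling needs. Write $D$ as a mixed graph: for a pair $\{u,v\}$ there may be no arc, one arc (a \emph{single edge}) or two opposite arcs (a \emph{double edge}). Each double edge contributes $2$ to both degrees. The hypothesis $\delta(D)\ge(1+\varepsilon)n$ then says that every vertex has on average a double edge to a positive fraction of the vertices, or at least sees almost all vertices.

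The basic gadget is the following observation. Every triangle in the underlying graph that contains at least one double edge can be oriented to contain a $T_3$. The same holds for any tournament on three vertices other than the cyclic $C_3$. Also every tournament on $4$ vertices contains a $T_3$.

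The proof has three steps.

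\textbf{Absorbing lemma.} We aim for a set $A$ with $|A|\le\beta n$, $3\mid |A|$, such that for every $W\subseteq V\setminus A$ with $|W|\le\beta^2 n$ and $3\mid|W|$, the digraph $D[A\cup W]$ has a perfect $T_3$-tiling. By the standard Lo--Markström argument (random selection followed by deletion of overlaps) it suffices to show that every triple $S$ has $\Omega(n^{c})$ absorbers of constant size $c$. An absorber for $S$ is a set $F$ such that both $D[F]$ and $D[F\cup S]$ have perfect $T_3$-tilings.

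We will instead prove the weaker \emph{reachability} statement: every two vertices $u,v$ are $(\beta,1)$-reachable. This means there are $\beta n^{2}$ pairs $\{x,y\}$ such that both $\{u,x,y\}$ and $\{v,x,y\}$ span a $T_3$. Reachability lemmas of Han's type then give the absorbers. To find reachable pairs, use $\alpha(D)\le\gamma n$. Inside the common neighbourhood $N(u)\cap N(v)$ of the underlying graph, which has size at least $\varepsilon n$ by the degree condition, every set of size $\gamma n$ contains an edge. So we obtain $\Omega(n^2)$ edges $xy$ there. It remains to make sure that orientations yield a $T_3$ on both triples rather than a $C_3$.

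The main obstacle is exactly this orientation issue. Pure single-edge tournament-like regions could force $C_3$'s. We handle it with a partition lemma: if reachability fails, then $V$ splits into at most two classes that are each closed under reachability and of linear size, and we merge them using one additional "connecting" $T_3$ with two vertices in one class. The degree excess $\varepsilon n$, together with the independence number bound applied inside neighbourhoods, prevents the extremal $C_3$-blowup configurations. Those configurations would need total degree about $n$, not $(1+\varepsilon)n$.

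\textbf{Almost perfect tiling.} In $D'=D-A$ we want a $T_3$-tiling covering all but $\beta^2 n$ vertices. We apply the regularity lemma to the underlying graph, with arc densities recorded in both directions. The reduced digraph $R$ inherits $\delta(R)\ge(1+\varepsilon/2)k$. The crucial use of $\alpha(D)\le\gamma n$ is that inside every cluster, and inside every pair whose density is small in one direction, we can still find $T_3$'s, because any linear-size subset spans many edges. Following \cite{Balogh}, we look for a fractional tiling in $R$ by "weighted" structures: edges of $R$ carrying a double arc, and single edges. Either type of edge $XY$ can be fully tiled by $T_3$'s with two vertices in one cluster and one in the other. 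Such a $T_3$ uses an edge inside a cluster (guaranteed by small $\alpha$) plus arcs across, and any orientation of the crossing pair works after choosing the inner edge's direction appropriately.

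So it suffices to find an almost perfect fractional matching in the underlying graph of $R$, with the $2{:}1$ weights. Its existence follows from $\delta(R)\ge(1/2+\varepsilon/4)\cdot 2k$ in total degree: the underlying degree is at least about half of $k$, and a Tutte/LP-duality argument gives the matching. Blowing back up then gives the almost tiling.

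\textbf{Finish.} Let $W$ be the set of leftover vertices. Then $|W|\le\beta^2 n$, and $3\mid|W|$ because $3\mid n$ and $3\mid |A|$. The absorbing property of $A$ gives a perfect $T_3$-tiling of $D[A\cup W]$, which completes the perfect tiling of $D$.
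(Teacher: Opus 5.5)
\textbf{There is a genuine gap, and it is in your absorbing step.} The one concrete argument you give for reachability is invalid. Knowing that every $\gamma n$-subset of $N(u)\cap N(v)$ spans an edge forces, by Tur\'an's theorem, only about $\varepsilon^2 n/(2\gamma)$ edges in a set of order $\varepsilon n$. A disjoint union of $\gamma n$ cliques of order $\varepsilon/\gamma$ shows this is tight, so you get linearly many edges, not $\Omega(n^2)$.

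Moreover, $(\beta,1)$-reachability of all pairs genuinely fails, for reasons unrelated to orientation. Take cliques $A,B$ of order $(1/2-\varepsilon)n-1$ with no edges between them. Take a set $S$ of order $2\varepsilon n$ that is a disjoint union of $\gamma n$ cliques, and join $S$ completely to $A\cup B$. Join $u$ to $A\cup S$ and $v$ to $B\cup S$, and replace every edge by a $2$-cycle. Then $\delta(D)\ge(1+2\varepsilon)n-2$ and $\alpha(D)\le\gamma n$, but $N(u)\cap N(v)=S$ spans only $O(n)$ arcs, so $u$ and $v$ are not $(\beta,1)$-reachable. So cyclic triangles are not the real obstruction. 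The entire absorbing lemma then rests on your ``partition lemma'', for which you give no argument. The bound of at most two classes is unsupported, and a single connecting $T_3$ is not what is needed.

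\textbf{What is missing} is precisely the content of Lemma~\ref{lem-Pexist} and part (iii) of Lemma~\ref{lem-absorbable collection Z}.

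First, the $\Omega(n^2)$ copies of $T_3$ through a vertex come from the degree condition, not from $\alpha(D)$. Orientation is sidestepped by working inside $N^{\Delta}(v)$: every arc there forms a $T_3$ with $v$, as source or sink. Each vertex of $N^{\Delta}(v)$ has at least $\varepsilon n$ neighbours in it, giving at least $(1+\varepsilon)\varepsilon n^2/4$ such arcs; likewise every arc lies in at least $\varepsilon n-1$ copies of $T_3$. Double counting pairs $(w,e)$ such that both $ve$ and $we$ form a $T_3$ then shows that every vertex is $(\varepsilon^2/2,1)$-linked to at least $\varepsilon^2 n/3$ vertices. This is exactly the input a Han-type (or Balogh--Molla--Sharifzadeh) partition lemma needs. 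It yields boundedly many classes of order at least $\psi n$ with longer linking sets; nothing bounds their number by two.

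Second, to shift residues between classes you need, across every bipartition of the classes, robustly many $T_3$'s of both types $(2,1)$ and $(1,2)$. These must be findable while avoiding a small set of already-used vertices. The paper obtains this by deriving connectivity of $C(\mathcal{P},\tau)$ from the degree condition and then applying Lemma~\ref{lem-path to T3}. With these two ingredients supplied, your lattice-style absorption would go through.

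Your almost-covering step via regularity is sound, provided $\gamma$ is chosen after the regularity constants. There you take a near-perfect matching in the underlying reduced graph, whose minimum degree exceeds $k/2$, and tile each dense pair by $T_3$'s found inside out- or in-neighbourhoods. It is a heavier but legitimate alternative to the paper's elementary exchange argument in Lemma~\ref{lem-cover}, which leaves only $O(1/\varepsilon)$ vertices uncovered.
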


Clearly, we have a corollary by Theorem \ref{thm-T3} for the semi-degree condition. 
\begin{corollary}\label{cor-T3}
	For every $\varepsilon>0$, there exists $\gamma>0$ and $n_0$ such that the following holds. For every $n$-vertex digraph $D$ with $n>n_0$, if $\delta^0(D) \geq (1/2+\varepsilon)n$ and $\alpha(D) \leq \gamma n$, then $D$ has a perfect $T_3$-tiling.
\end{corollary}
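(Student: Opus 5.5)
The corollary follows directly from Theorem~\ref{thm-T3} once the semi-degree hypothesis is converted into a minimum total degree bound. Given $\varepsilon>0$, I apply Theorem~\ref{thm-T3} with $\varepsilon' = 2\varepsilon$. This yields constants $\gamma'$ and $n_0'$, and I set $\gamma=\gamma'$ and $n_0=n_0'$.

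Now let $D$ be an $n$-vertex digraph with $n>n_0$, $\delta^0(D)\ge (1/2+\varepsilon)n$ and $\alpha(D)\le \gamma n$. In this paper $\delta(D)$ counts all arcs incident to a vertex, so every vertex $v$ satisfies
\[
d(v)=d^+(v)+d^-(v)\ge 2\delta^0(D)\ge (1+2\varepsilon)n=(1+\varepsilon')n .
\]
This gives $\delta(D)\ge(1+\varepsilon')n$. The independence condition is unchanged, since it is the same quantity $\alpha(D)$ in both statements.

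Theorem~\ref{thm-T3}, applied with $\varepsilon'$, now gives a perfect $T_3$-tiling of $D$. Any divisibility requirement, such as $3\mid n$, is implicit in both statements in the same way, so it transfers unchanged.

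There is no real obstacle here. The only point needing care is the convention that $\delta(D)$ is the minimum total degree $d^+(v)+d^-(v)$, which is exactly why the semi-degree threshold $1/2$ doubles to the threshold $1$ in Theorem~\ref{thm-T3}. Note that choosing $\varepsilon'=\varepsilon$ would also suffice, since $2\varepsilon\ge\varepsilon$.
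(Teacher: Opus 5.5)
Your proposal is correct and matches the paper's argument, which the paper only states as ``clearly'' following from Theorem~\ref{thm-T3}. Under the paper's total-degree convention, $d(v)=d^+(v)+d^-(v)\ge 2\delta^0(D)\ge(1+2\varepsilon)n$ for every vertex $v$, so Theorem~\ref{thm-T3} applies directly with the same independence-number bound.
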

	This Corollary immediately implies the result of Balogh, Molla, and Sharifzadeh \cite{Balogh} for graphs (Theorem \ref{thm-Balogh}). Indeed, given a graph $G$ with $\delta(G) \geq (1/2+\varepsilon)n$ and $\alpha(G) \leq \gamma n$, replacing each edge by a $2$-cycle yields a digraph that satisfies the conditions of Corollary \ref{cor-T3}. Consequently, a perfect $T_3$-tiling exists, which corresponds to a perfect $K_3$-tiling of $G$.
	
	The semi-degree condition in Corollary \ref{cor-T3} is asymptotically best possible.
	So the minimum degree condition in Theorem \ref{thm-T3} is also asymptotically best possible.
	We construct a digraph $D$ to show this.
	Let $n=2k+1$ and $k \equiv 1 (\mbox{mod}\ 3)$. Then $n$ is divisible by $3$. 
	Consider a digraph $D$ formed by two complete digraphs$D_1$ and $D_2$, each of order $k+1$, with $|V(D_1)\cap V(D_2)|=1$. Let $v\in V(D_1)\cap V(D_2)$.
	It is easy to see that $\alpha(D)=2 \leq \gamma n$ and $\delta(D)=n-1$. 
	If there exists a perfect $T_3$-tiling $\mathcal{T}$ of $D$, by $k \equiv 1 (\mbox{mod}\ 3)$, there exists a copy $T\in \mathcal{T}$ which contains two vertices, one from $V(D_1)\setminus \{v\}$ and the other from $V(D_2)\setminus \{v\}$. These two vertices are not adjacent, a contradiction.
	\bigskip
	
	\textbf{Organisation.}  
	In Section 2, we introduce relevant definitions and preliminary results that will be utilized throughout the paper, followed by a brief overview of the proof framework. 
	The proof of Theorem \ref{thm-T3} is inspired by the idea of \cite{Balogh}. 
	First, the Absorbing Lemma (Lemma \ref{lem-absorb}) yields an absorbing set with the property that any small subset of the remaining vertices can be absorbed into it to form a perfect tiling. Then, applying the Almost-covering Lemma (Lemma \ref{lem-cover}) to the rest of the digraph covers almost all vertices, and the few uncovered vertices are subsequently absorbed.
	In Section 3, we present the Absorbing Lemma along with its proof. Subsequently, in Section 4, we prove the Almost-covering Lemma and employ it to establish Theorem \ref{thm-T3}. Finally, we conclude with some potential problems for future research.
	
	\section{Preparation}
	\subsection{Notation}
Most of the notations we use are standard, following the reference \cite{Diestel}.
Let $D=(V,A)$ be a digraph with vertex set $V(D)$ and arc set $A(D)$. For simplicity, we also denote a graph by $G=(V,E)$ with vertex set $V(G)$ and edge set $E(G)$, whenever the orientation of arcs is irrelevant. A \emph{semi‑complete digraph} is a digraph in which, for every pair of distinct vertices, at least one of the two possible directed arcs is present.
For each $v\in V(D)$, we use $N_D(v)$ (respectively, $N^+_D(v),\ N^-_D(v)$) to denote the \emph{neighborhood} (respectively, \emph{out-neighborhood}, \emph{in-neighborhood}) of $v$ in $D$ and $d_D(v)=|N_D(v)|$ (respectively, $d^+_D(v)=|N^+_D(v)|,\ d^-_D(v)=|N^-_D(v)|$). 
Let $N^{\Delta}_D(v)$ be the larger of $N^+_D(v)$ and $N^-_D(v)$ and  $d^{\Delta}_D(v)=\max\{d^+_D(v), d^-_D(v)\}$.
For any two disjoint sets $X, Y \subset V(D)$, let $A^+_{D}(X,Y)$ (respectively, $A^-_{D}(X,Y)$) denote the set of all arcs from $X$ to $Y$ (respectively, from $Y$ to $X$), and let $a^+_{D}(X,Y)=|A^+_D(X,Y)|$ (respectively, $a^-_{D}(X,Y)=|A^-_D(X,Y)|$).  
Let $A_{D}(X,Y)=A^+_D(X,Y)\cup A^-_D(X,Y)$ and $a_{D}(X,Y)=|A_D(X,Y)|$.
For a vertex $v\in V(D)$, let $d^+_{D}(v,X)$ (respectively, $d^-_{D}(v,X)$) denote the number of arcs from $v$ to $X$ (respectively, from $X$ to $v$).  
If there is no risk of ambiguity, we omit the subscript in the above notations.

Denote the subgraph of $D$ induced by $X$ by $D[X]$.
In this paper, if the subdigraph induced by a vertex $v$ and an arc $e$ in $D$ is a $T_3$, we simply say that $v$ and $e$ form a $T_3$ in $D$.
A \emph{path} from vertex $x$ to vertex $y$ is a sequence of distinct vertices $v_0 = x,\; v_1,\; v_2,\; \dots,\; v_n = y$ such that for each $i = 0,1,\dots,n-1$, either $(v_i, v_{i+1}) \in A(D)$ or $(v_{i+1}, v_i) \in A(D)$. We denote such a path by an $(x,y)$-path. Note that the paths considered in this paper are undirected (directions are ignored).
We call a digraph $D$ \emph{connected} if there exists a path between any two vertices in $V(D)$.
Let $[d]=\{1,2,\dots,d\}$.
The notation $\beta \ll \gamma$ indicates the existence of an increasing function $f$ such that, whenever $\beta$ and $\gamma$ are constants with $\beta \leq f(\gamma)$, the statement holds.  
The function $f$ is not always given explicitly, but it can be determined.
The constants used throughout this paper adhere to the relations below. 
\begin{equation*}
		0<\gamma \ll \zeta \ll \beta \ll \eta \ll \sigma \ll \phi \ll \psi \ll  \varepsilon <1/8.
\end{equation*}
Notably, while the specific constants represented by each symbol may vary from lemma to lemma or theorem to theorem, their order-of-magnitude relations remain invariant and obey the equations above.
	
	Now, we present some definitions that are used in the Absorbing Lemma.
	These definitions were initially given in \cite{Balogh} and we have made certain modifications, specifically changing the triangle $K_3$ to $T_3$, to make them applicable to digraphs.
	We first introduce a key definition underlying the absorption structure.
	\begin{definition}[\textbf{$(c,k)$-linked vertices}]\label{def-linked vertices}
		Let $D$ be an $n$-vertex digraph and let $x,y\in V(D)$ be two distinct vertices.
		Define $\mathcal{U}$ to be the collection of subsets $U\subseteq V(D)$ of size $3k-1$ for which both $D[U\cup\{x\}]$ and $D[U\cup\{y\}]$ contain a perfect $T_3$-tiling.
		We say that $x$ and $y$ are $(c,k)$-linked if $|\mathcal{U}|\geq (cn)^{3k-1}$.
		Each set $U\in\mathcal{U}$ is called a $k$-linking set for $\{x,y\}$ (see Figure \ref{fig-U.}).
		A set $V'\subseteq V(D)$ is $(c,k)$-linked if every pair of distinct vertices in $V'$ is $(c,k)$-linked.
		For each vertex $v\in V(D)$, let $L_{c,k}(v)$ denote the set of vertices that are $(c,k)$-linked with $v$.
	\end{definition}
\begin{figure}
	\centering
	\includegraphics[scale=0.9]{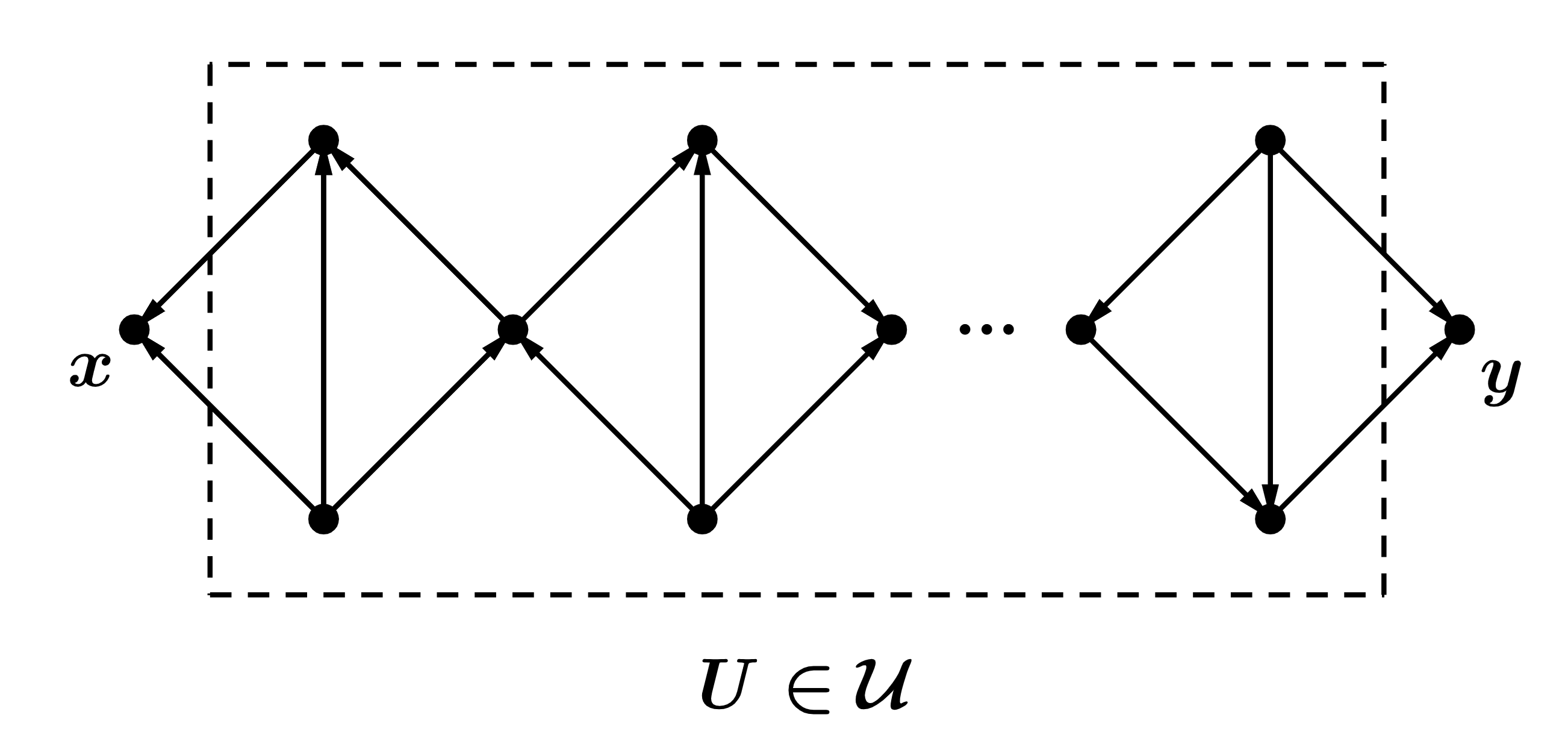}
	\caption{A $k$-linking set $U$ for $\{x,y\}$.}\label{fig-U.}
\end{figure}
	Building on the above definition, we now give the following one.
	\begin{definition}[\textbf{$(\psi,\phi,k)$-linked partition}]
		Given constants $0<\phi<\psi\leq 1$ and a positive integer $k$, let $d$ be a positive integer with $d\leq 1/\psi$.
		Let $\mathcal{P}=\{V_1,\dots,V_d\}$ be a partition of $V(D)$.
		We call $\mathcal{P}$ a $(\psi,\phi,k)$-$linked$ $partition$ if $|V_i|\geq \psi n$ and $V_i$ is $(\phi,k)$-linked for every $i\in[d]$.
	\end{definition}
Given a partition $\mathcal{P}=\{V_1,\dots,V_d\}$ of $V(D)$, let $I=(i_1,i_2,i_3)$ with $i_1,i_2,i_3\in[d]$. Such triples are used in constructing an absorbable collection for $T_3$. The more transitive triangles $T_3$ a triple $I$ yields, the more useful it is for our construction. 
Denote by $t(\mathcal{P},I)$ the number of distinct transitive triangles $T_3=v_1v_2v_3$ such that $v_j\in V_{i_j}$ for each $j\in[3]$. Since the indices in $I$ need not be distinct, for each $i\in[d]$ we let $\nu_I(i)$ be the multiplicity of $i$ in $I$.
We only consider triples $I$ with $t(\mathcal{P},I)>\phi n^3$. Accordingly, define
\begin{equation}\label{equ-Imax}
	\mathcal{I}_\phi = \bigl\{ I : t(\mathcal{P},I) > \phi n^3 \bigr\}.
\end{equation}
For every $1\leq i\leq d$, let $t(\phi,i)$ be the number of times the index $i$ appears in $\mathcal{I}_\phi$. To make the structure of these triples more transparent and easier to work with, we reorganize them into the following collection.
	\begin{definition}[\textbf{$(\mathcal{P},\phi,\eta)$-absorber}]\label{def-absorber}
		Given constants $0<\eta<\phi<\psi\leq 1$, let $\mathcal{P}=\{V_1,\dots,V_d\}$ be a partition of $V(D)$ and let $X\subseteq V(D)$.
		A collection $\mathcal{Z}$ of vertex-disjoint subsets of $V(D)\setminus X$ is called $(\mathcal{P},\phi,\eta)$-absorbable with respect to $X$ if the following holds: for each ordered triple $I=(i_1,i_2,i_3)\in\mathcal{I}_\phi$ and each $j\in[3]$, there exists a set $Z_j^I\in\mathcal{Z}$ such that $Z_j^I\subseteq V_{i_j}$ and $|Z_1^I|=|Z_2^I|=|Z_3^I|\leq \eta n$.
		Furthermore, $X$ is called an $(\mathcal{P},\phi,\eta)$-absorber if, for every collection $\mathcal{Z}$ that is $(\mathcal{P},\phi,\eta)$-absorbable (with respect to $X$), $D[X\cup V(\mathcal{Z})]$ has a perfect $T_3$-tiling.
	\end{definition}
Next, we construct an auxiliary digraph of $D$ in the following manner to illustrate the relationships among the sets in $\mathcal{Z}$.
\begin{definition}\label{def-CQ}
	Let $D$ be an $n$-vertex digraph and $\mathcal{Q}=\{Q_{1}, \dots, Q_{m}\}$ be a family of subsets of $V(D)$. Given constant $q>0$, let $C(\mathcal{Q}, q)$ be the digraph with vertex set $\mathcal{Q}$ such that:
	\begin{equation*}
		\begin{array}{rl}{(Q_{i},Q_{j})\in A(C(\mathcal {Q},q))\Leftrightarrow }&{|\{ v\in Q_{i}:|N^+(v) \cap Q_{j}| \geq qn \} | \geq qn \text{ and }}\\
			&{|\{u \in Q_{j}:|N^-(u) \cap Q_{i}| \geq qn\} | \geq qn. }\end{array}
	\end{equation*}
\end{definition}

	\subsection{Useful lemmas}
	In this subsection, we present some results on the absorbing method. 
	The following lemma is the Chernoff bound, see e.g. Theorem 2.10 in \cite{Janson}.
	
	\begin{lemma}\cite{Janson}\label{thm-Chernoff bound}
		Let $X$ be a hypergeometric random variable or let $X=\sum_{i=1}^{n} X_{i}$ where $X_{1}, \dots, X_{n}$ are independent random indicator variables. If $0<\lambda \leq 3 / 2$, then 
		\begin{equation*}
			\mathbb{P}(|X-\mathbb{E} X| \geq \lambda \mathbb{E} X) \leq 2 \exp \left(-\frac{\lambda^{2}}{3} \mathbb{E} X\right).
		\end{equation*}
		In particular, the above inequation applies when $X$ is a binomial random variable.
	\end{lemma}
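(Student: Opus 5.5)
This is the standard Chernoff bound for sums of independent indicators and for hypergeometric variables, so I would follow the exponential moment (Bernstein–Chernoff) method. \textbf{Step 1: upper tail for independent indicators.} Let $X=\sum X_i$ with $\mathbb{P}(X_i=1)=p_i$ and $\mu=\mathbb{E}X$. For $t>0$, independence and $1+x\le e^x$ give
\[
\mathbb{E}e^{tX}=\prod_i\bigl(1+p_i(e^t-1)\bigr)\le \exp\bigl(\mu(e^t-1)\bigr).
\]
Markov's inequality applied to $e^{tX}$, with the choice $t=\log(1+\lambda)$, then yields
\[
\mathbb{P}\bigl(X\ge(1+\lambda)\mu\bigr)\le \exp\bigl(-\mu\,\varphi(\lambda)\bigr),\qquad \varphi(x)=(1+x)\log(1+x)-x.
\]
\textbf{Step 2: lower tail.} The same argument with $e^{-tX}$ gives $\mathbb{P}(X\le(1-\lambda)\mu)\le\exp(-\mu\varphi(-\lambda))$ for $0<\lambda<1$. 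The event is empty if $\lambda>1$. For $\lambda=1$ we have $\mathbb{P}(X=0)=\prod(1-p_i)\le e^{-\mu}\le e^{-\mu/3}$.

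\textbf{Step 3: the elementary inequality.} Next I would check $\varphi(x)\ge x^2/3$ for $0\le x\le 3/2$, and $\varphi(-x)\ge x^2/2\ge x^2/3$ for $0\le x<1$. The second follows since $\varphi(-x)=\sum_{k\ge2}x^k/(k(k-1))$. For the first, the standard bound $\varphi(x)\ge x^2/(2(1+x/3))$ can be obtained by comparing derivatives twice: both sides vanish at $0$ along with their first derivatives, and the second derivative of the left side dominates. The right side is at least $x^2/3$ exactly when $1+x/3\le 3/2$, i.e.\ $x\le 3/2$. This is precisely where the hypothesis $\lambda\le 3/2$ enters, and it is the only delicate computation. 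Adding the two tails gives the factor $2$.

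\textbf{Step 4: hypergeometric case.} Here I would invoke Hoeffding's observation: if $X$ counts marked elements in a uniform $m$-subset drawn without replacement, and $Y$ is the corresponding binomial count drawn with replacement, then $\mathbb{E}f(X)\le\mathbb{E}f(Y)$ for every convex $f$. Applying this with $f(x)=e^{\pm tx}$ shows that the moment generating function bound from Step 1 still holds, and Steps 1--3 then go through verbatim. Proving Hoeffding's convex-ordering lemma from scratch would be the main obstacle. I would simply cite it, in the same way the paper cites Theorem 2.10 of \cite{Janson}. The binomial case is the special case of equal $p_i$.
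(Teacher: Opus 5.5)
Your proof is correct and is the standard argument behind the cited result. The paper gives no proof of its own and simply cites Theorem~2.10 of Janson et al., which rests on the same ingredients you use: the exponential-moment bound, the estimate $\varphi(x)\ge x^2/(2(1+x/3))$ (which is at least $x^2/3$ exactly when $x\le 3/2$), and Hoeffding's convex comparison to carry the moment generating function bound over to the hypergeometric case. One small repair in Step 4: your $\lambda=1$ lower tail used the formula $\prod_i(1-p_i)$, which does not apply to the hypergeometric variable; instead let $t\to\infty$ in $\mathbb{P}(X=0)\le \mathbb{E}e^{-tX}\le \exp\bigl(\mu(e^{-t}-1)\bigr)$ to get $\mathbb{P}(X=0)\le e^{-\mu}$.
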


	Following the proof framework in \cite{Balogh} and Definition \ref{def-linked vertices}, we obtain the following lemma. This lemma can be derived readily. Therefore, for completeness,  we provide a simple proof.

	\begin{lemma}\label{pro-Lconnect}
		Given $k_{1}$, $k_{2} \in \mathbb{N}$ and constants $c, c_1, c_{2}>0$. 
		Let $D$ be a digraph and $x_{1}, x_{2} \in V(D)$. Denote $k=k_{1}+k_{2}$ and $c'=\min \{c, c_{1}, c_{2}\}$. If 
		\[
		\left|L_{c_{1}, k_{1}}\left(x_{1}\right) \cap L_{c_{2}, k_{2}}\left(x_{2}\right)\right| \geq c n,
		\]
		then $x_{1}$ and $x_{2}$ are $\left(\frac{1}{3} c', k\right)$-linked.
	\end{lemma}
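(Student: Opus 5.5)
The idea is to build linking sets for $\{x_1,x_2\}$ by concatenation through a common linked vertex. Fix any $y \in L_{c_1,k_1}(x_1)\cap L_{c_2,k_2}(x_2)$. Take a $k_1$-linking set $U_1$ for $\{x_1,y\}$ and a $k_2$-linking set $U_2$ for $\{y,x_2\}$, with $U_1$, $U_2$ and $\{x_1,x_2,y\}$ pairwise disjoint. We claim that $U=U_1\cup U_2\cup\{y\}$, which has size $(3k_1-1)+(3k_2-1)+1=3k-1$, is a $k$-linking set for $\{x_1,x_2\}$.

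The claim holds because the tilings can be combined in both directions:
\begin{itemize}
\item $D[U\cup\{x_1\}]$ has a perfect $T_3$-tiling: tile $U_1\cup\{x_1\}$ using the property of $U_1$ for $x_1$, and tile $U_2\cup\{y\}$ using the property of $U_2$ for $y$.
\item $D[U\cup\{x_2\}]$ has a perfect $T_3$-tiling: tile $U_1\cup\{y\}$ and $U_2\cup\{x_2\}$ in the same way.
\end{itemize}
Since the pieces are disjoint, these unions give the required perfect tilings.

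It remains to count the sets produced. Count ordered triples $(y,U_1,U_2)$ with the disjointness conditions. There are at least $cn$ choices of $y$. For each $y$ there are at least $(c_1n)^{3k_1-1}$ linking sets $U_1$ for $\{x_1,y\}$. We must drop those meeting $x_2$, and those meeting $y$ or $x_1$ cannot occur by definition. Those containing $x_2$ number at most $n^{3k_1-2}$, which is $o$ of the main term for large $n$. Given $y$ and $U_1$, the $U_2$ meeting $U_1\cup\{x_1\}$ number $O(k\,n^{3k_2-2})$, also negligible. So, for $n$ large, the number of triples is at least
\[
\tfrac12\, cn\,(c_1n)^{3k_1-1}(c_2n)^{3k_2-1}\ \ge\ \tfrac12 (c'n)^{3k-1}.
\]

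The main obstacle is overcounting: the same set $U$ may arise from many triples. Any given $U$ of size $3k-1$ arises from at most $(3k-1)\binom{3k-2}{3k_1-1}$ triples, namely a choice of $y\in U$ and a split of the rest into $U_1$ and $U_2$. Hence
\[
|\mathcal U|\ \ge\ \frac{(c'n)^{3k-1}}{2(3k-1)\binom{3k-2}{3k_1-1}}.
\]
This is at least $(c'n/3)^{3k-1}$, because $3^{3k-1}$ dominates $2(3k-1)2^{3k-2}$ for every $k\ge2$ (indeed $k\ge 2$, as $k_1,k_2\ge1$). Therefore $x_1$ and $x_2$ are $(\tfrac13 c',k)$-linked.

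The only care needed is to read $(cn)^{3k-1}$ loosely (counting ordered tuples versus sets only changes constants, which the factor $3^{3k-1}$ absorbs) and to take $n$ sufficiently large for the error terms.
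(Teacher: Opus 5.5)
Your proposal is correct and follows essentially the same route as the paper. Both arguments count triples $(y,U_1,U_2)$ with $y$ a common linked vertex, merge them into the $k$-linking set $U_1\cup U_2\cup\{y\}$, and divide by the at most $(3k-1)\binom{3k-2}{3k_1-1}\le 3^{3k-1}$ triples that produce the same set. Your version is slightly more careful: you explicitly discard triples in which $U_1$, $U_2$ and $\{x_1,x_2,y\}$ overlap, which the paper silently ignores. You absorb the resulting factor $\tfrac12$ via $2(3k-1)2^{3k-2}\le 3^{3k-1}$ for $k\ge 2$, with $n$ taken large.
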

\begin{proof}
	Assume that $k_{1} \leq k_{2}$, and let $(x, U_{1}, U_{2})$ be an ordered triple such that $x \in L_{c_{1}, k_{1}}(x_{1}) \cap L_{c_{2}, k_{2}}(x_{2})$ and $U_{i}$ is a $k_{i}$-linking set for $\{x_{i}, x\}$, $i \in [2]$. Since $\left|L_{c_{1}, k_{1}}\left(x_{1}\right) \cap L_{c_{2}, k_{2}}\left(x_{2}\right)\right| \geq c n$, the number of such ordered triples is at least
	\[
	cn\cdot (c_{1}n)^{3k_{1}-1}\cdot (c_{2}n)^{3k_{2}-1}\geq (c^{\prime }n)^{3k_{1}+3k_{2}-1}.
	\]
	Then $U=\{x\} \cup U_{1} \cup U_{2}$ is a $(k_{1}+k_{2})$-linking set for $\{x_{1}, x_{2}\}$. 
	Next, we count how many $U$ are repeated.
	Suppose another such triple $(x', U_{1}', U_{2}')$ yields the same set $U$. To bound the number of repetitions, we note that $b+1 \leq 3 \cdot(3 / 2)^{b}$ for any integer $b>0$. Picking $x'$ first and then $U_{1}'$ from the remaining vertices, we obtain at most
	\[
	\left(3 k_{1}+3 k_{2}-1\right) \cdot\binom{3 k_{1}+3 k_{2}-2}{3 k_{1}-1} \leq\left(3 \cdot\left(\frac{3}{2}\right)^{3 k_{1}+3 k_{2}-2}\right) \cdot 2^{3 k_{1}+3 k_{2}-2}=3^{3 k_{1}+3 k_{2}-1}
	\]
	such triples $(x', U_{1}', U_{2}')$.
	So the number of different $U$ is at least $(c^{\prime }n/3)^{3k_{1}+3k_{2}-1}$, i.e., $x_{1}$ and $x_{2}$ are $\left(\frac{1}{3} c', k\right)$-linked.
\end{proof}

We construct an auxiliary graph for the digraph $D$ that reflects the linked relationships among the vertices.

\begin{lemma}\cite{Balogh}\label{lem-bi-M}
	Let $F$ be a bipartite graph with classes $(A, B)$ and $0<\varepsilon \leq1$ be such that $d_{F}(a) \geq\varepsilon|B|$ for every $a \in A$ and $d_{F}(b) \geq\varepsilon|A|$ for every $b \in B$. If $B$ is sufficiently large, then for every $0<\psi<\varepsilon^{4} / 64$ there exists a positive integer $d$ and a collection of disjoint subsets $\{S_{1}, \dots, S_{d}\}$ of $B$ such that
	\begin{itemize}
		\item for every $i \in[d]$, $|S_{i}| \geq\psi|B|$,
		\item $|\bigcup_{i=1}^{d} S_{i}| \geq(1-\psi)|B|$, and
		\item for every $i \in[d]$, there are at most $\psi^{3}|B|^{2}$ pairs $b, b' \in S_{i}$ such that $| N_{F}(b) \cap N_{F}(b')|<\psi^{4}|A|$.
	\end{itemize}
\end{lemma}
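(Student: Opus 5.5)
The plan is to build the sets $S_i$ greedily, by clustering vertices of $B$ whose neighbourhoods in $A$ are nearly identical, after first passing to a regularised description of $F$. One natural route goes through vectors: assign to each $b\in B$ its indicator vector $\mathbf{1}_{N_F(b)}\in\{0,1\}^A$. Then $|N_F(b)\cap N_F(b')|$ is the inner product of these vectors, and each vector has weight at least $\varepsilon|A|$.

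Rather than Szemerédi-style regularity, I would use a simple iterative ``dense common neighbourhood'' argument. Call a pair $b,b'$ \emph{bad} if $|N_F(b)\cap N_F(b')|<\psi^4|A|$. The key step is a density lemma. Suppose $B'\subseteq B$ has $|B'|\ge \psi|B|$. Then there is a vertex $a$, or more robustly a set, such that the neighbourhood $N_F(a)\cap B'$ is large and contains few bad pairs. To prove this, I count paths $b$–$a$–$b'$ with $b,b'\in B'$. By Cauchy–Schwarz applied to the degrees into $A$,
\[
\sum_{a\in A} d_{B'}(a)^2 \ge \frac{(\varepsilon|A||B'|)^2}{|A|} = \varepsilon^2|A||B'|^2 .
\]
Hence the average pair in $B'$ has about $\varepsilon^2|A|$ common neighbours. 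Choosing $a$ at random with probability proportional to $d_{B'}(a)^2$, the set $S=N_F(a)\cap B'$ has expected size at least $\varepsilon^2|B'|$. Moreover, the expected number of bad pairs inside $S$ is at most $\sum_{\text{bad}}|N(b)\cap N(b')|/\sum_a d_{B'}(a)^2 \le \psi^4|A||B'|^2/(\varepsilon^2|A||B'|^2)\cdot |B'|^2$-scale. This is $\psi^4\varepsilon^{-2}|B|^2$, which is at most $\psi^3|B|^2$ once $\psi\le\varepsilon^2$. Comparing expectations, some $a$ gives $|S|\ge \varepsilon^2|B'|/2$ with at most $2\psi^4\varepsilon^{-2}|B|^2\le\psi^3|B|^2$ bad pairs, using $\psi<\varepsilon^4/64$.

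I would then iterate. Start with $B'=B$, extract $S_1$, remove it, and repeat while $|B'|\ge\psi|B|$. Each extracted $S_i$ has size at least $\varepsilon^2\psi|B|/2$. This is weaker than the required $\psi|B|$, and fixing this mismatch is the main obstacle. I would fix it by discarding tiny pieces, or better by keeping only $S_i$ of size at least $\psi|B|$ and letting the remainder be absorbed into the leftover. This works only if the leftover stays below $\psi|B|$, so a cleaner way is needed.

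A cleaner way is to rescale the parameters. Run the greedy extraction with threshold parameter $\psi'$ chosen so that $\varepsilon^2\psi'/2\ge\psi$. The greedy process stops with leftover at most $\psi'|B|$, which is too big. So instead I would merge small leftover pieces into existing $S_i$. A vertex $b$ of the leftover has $\ge\varepsilon|A|$ neighbours. By pigeonhole over the at most $2/(\varepsilon^2\psi)$ parts, it has large common neighbourhood with most vertices of some part, so it can be added while creating few bad pairs. The leftover vertices that cannot be placed must then number at most $\psi|B|$, because each such vertex forms many bad pairs, and the total bad count is controlled.

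Balancing these constants, especially the sum of bad pairs over added vertices against $\psi^3|B|^2$, is where I expect the real work to be. It is presumably the reason for the specific bound $\psi<\varepsilon^4/64$. Since the paper cites this lemma from \cite{Balogh}, I would follow their counting closely there.
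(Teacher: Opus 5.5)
The paper does not prove this lemma; it quotes it from Balogh, Molla and Sharifzadeh. So I judge your plan on its own terms. It has a genuine gap, exactly at the iteration step you flag.

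Your first step is sound, provided the size condition and the bad-pair count are combined in a single averaging inequality. For example, sum $|S_a|^2-K\cdot(\text{number of bad pairs in }S_a)$ over those $a$ with $|S_a|\ge\varepsilon^2|B'|/2$.

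\textbf{Where the iteration fails.} Extracting $N_F(a)\cap B'$ from a shrinking remainder $B'$ only gives pieces of size $\Omega(\varepsilon^2|B'|)$. You therefore get stuck with a remainder of size between $\psi|B|$ and $2\psi|B|/\varepsilon^2$, and the proposed repair does not work.
\begin{itemize}
\item Pigeonhole only shows that a leftover vertex $b$ has large common neighbourhood with some fraction of some part, not with most of it. Inserting $b$ can then create $\Theta(|S_i|)$ bad pairs, and $\Theta(\psi|B|)$ such insertions exceed the budget $\psi^3|B|^2$.
\item The claim that ``the total bad count is controlled'' is false. Only the bad pairs inside the chosen parts are controlled; $B$ itself may contain $\Omega(|B|^2)$ bad pairs.
\end{itemize}

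\textbf{A concrete counterexample.} Take $\varepsilon\le 1/4$ and $A=P_1\cup\dots\cup P_4$ with equal parts. Let $B$ consist of classes $B_{13},B_{14},B_{23},B_{24}$, where $B_{ij}$ is complete to $P_i\cup P_j$, of sizes $0.26|B|$, $0.26|B|$, $(0.24-0.6\psi)|B|$, $(0.24-0.6\psi)|B|$. Add classes $C_3,C_4$ of size $0.6\psi|B|$, complete to $P_3$ and $P_4$ respectively.
\begin{itemize}
\item Even if you choose the largest admissible $N_F(a)\cap B'$ each time, the greedy takes $B_{13}\cup B_{14}$ and then $B_{23}\cup B_{24}$. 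This leaves $C_3\cup C_4$, of size $1.2\psi|B|$.
\item Any subset of the remainder of size $\psi|B|$ has at least $0.24\psi^2|B|^2>\psi^3|B|^2$ bad pairs.
\item Each vertex of $C_3$ or $C_4$ has empty common neighbourhood with roughly a quarter of $B$ inside either existing part.
\end{itemize}
A valid output, $B_{13}\cup B_{23}\cup C_3$ and $B_{14}\cup B_{24}\cup C_4$, exists only if earlier choices are revised, and your scheme has no mechanism for that.

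\textbf{The missing idea.} You need parts whose size is a fixed fraction of $|B|$, not of the current remainder. Your argument never uses the hypothesis $d_F(a)\ge\varepsilon|B|$, and that is exactly what supplies such sets: use full neighbourhoods $N_F(a)\subseteq B$ rather than $N_F(a)\cap B'$. One way to close the gap is as follows.
\begin{itemize}
\item Pick $k=\lceil 2\ln(1/\psi)/\varepsilon\rceil$ vertices $a_1,\dots,a_k\in A$ independently and uniformly at random.
\item Assign every $b\in\bigcup_j N_F(a_j)$ to a uniformly random index $j$ with $b\in N_F(a_j)$, and let $S_j$ be the set of vertices assigned to $j$.
\end{itemize}
Then the three bullet points of the lemma follow.
\begin{itemize}
\item The expected number of unassigned vertices is at most $(1-\varepsilon)^k|B|\le\psi^2|B|$.
\item A bad pair $b,b'$ can land in a common $S_j$ only if some $a_j\in N_F(b)\cap N_F(b')$, which has probability less than $k\psi^4$. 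So the expected number of bad pairs inside the parts is less than $k\psi^4|B|^2$, and $k\psi<1/4$.
\item Conditional on the $a_j$, each $|S_j|$ is a sum of independent indicators with mean at least $|N_F(a_j)|/k\ge\varepsilon|B|/k$. Here $\psi<\varepsilon^4/64$ gives $k\le\varepsilon/(2\psi)$, so the mean is at least $2\psi|B|$. Hence $|S_j|\ge\psi|B|$ with probability $1-o(1)$ as $|B|\to\infty$.
\end{itemize}
Markov's inequality for the first two quantities and Chernoff for the third give all three bullet points simultaneously, with $d=k$.
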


We also use an auxiliary digraph (as defined in Definition \ref{def-CQ}) in the following proof of the Absorbing Lemma and present one of its properties.
Note that the path which we used in the following lemma does not need to be a directed path.
\begin{figure}
	\centering
	\includegraphics[scale=0.7]{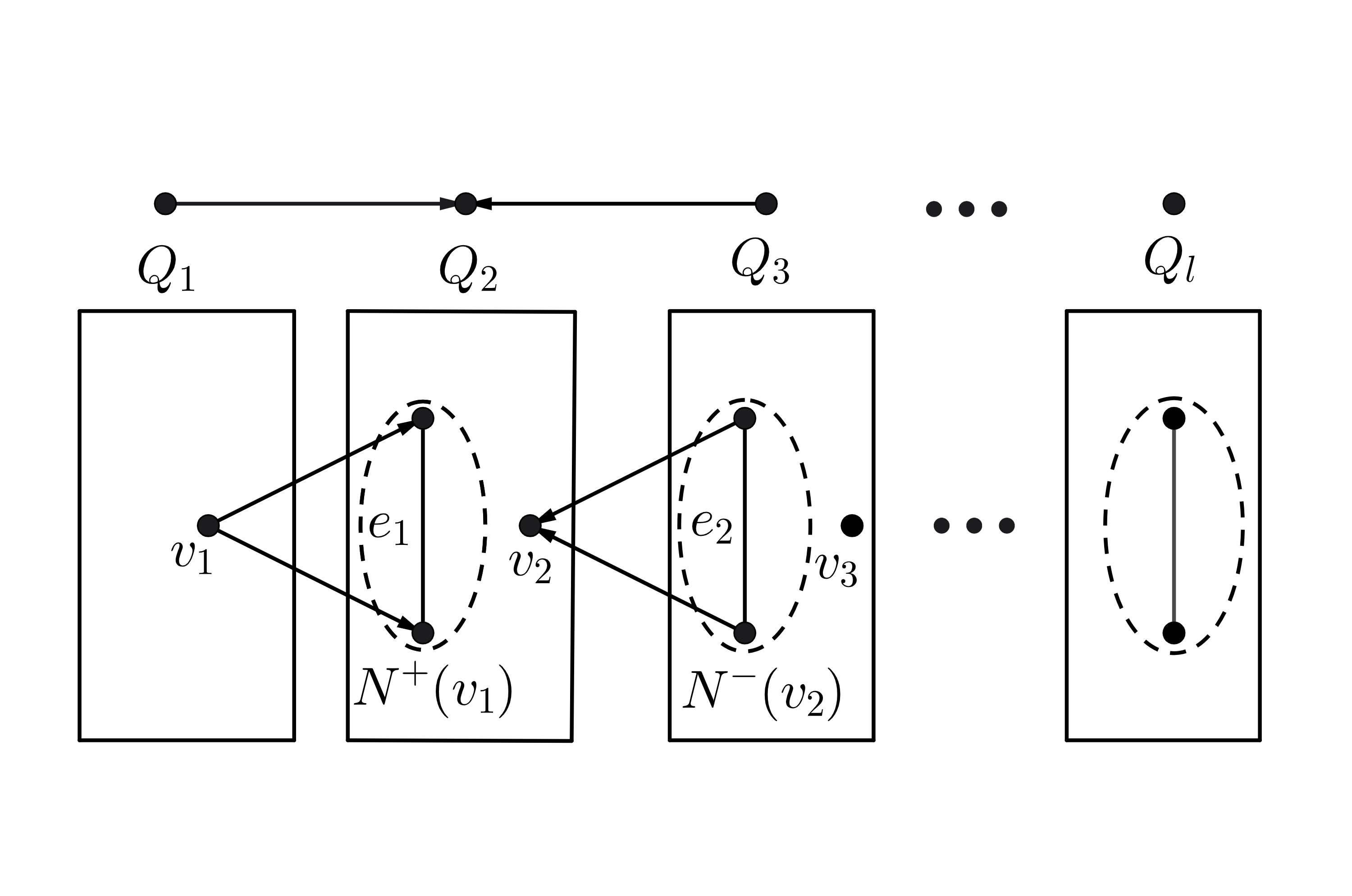}
	\caption{Relation between a path $P$ in $C(\mathcal{Q}, q)$ and disjoint $T_3$'s in $D$.}\label{fig-path.}
\end{figure}
\begin{lemma}\label{lem-path to T3}
	Given constants $0<\gamma<q<1$ and $d \in \mathbb{N}$, let $D$ be an $n$-vertex digraph such that $\alpha(D) \leq\gamma n$ and $\mathcal{Q}=\{Q_{1}, \dots, Q_{d}\}$ be a collection of pairwise disjoint subsets of $V(D)$. For any given $W \subseteq V(D)$ such that $|W|<(q-\gamma) n$, if we have a $(Q, Q')$-path $P$ in $C(\mathcal{Q}, q)$, then there exists a set $\mathcal{T}$ of vertex disjoint $T_3$'s in $G[V(\mathcal{Q}) \setminus W]$ 
	such that:
	\begin{itemize}
		\item $| \mathcal{T}| =|A(P)|$, $|V(\mathcal{T})\cap Q|=1$, $|V(\mathcal{T})\cap Q'|=2$, and
		\item $|V(\mathcal{T}) \cap Q''| \in\{0,3\}$ for every $Q'' \in \mathcal{Q}\setminus (Q\cup  Q')$.
	\end{itemize}
\end{lemma}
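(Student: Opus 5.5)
We argue by induction on the length $\ell=|A(P)|$ of the path $P=Q_0Q_1\cdots Q_\ell$ with $Q_0=Q$ and $Q_\ell=Q'$. The key building block is the following one-arc claim. If $(Q_i,Q_j)\in A(C(\mathcal{Q},q))$ (in either direction) and $W'$ is any set with $|W'|<(q-\gamma)n$, then there is a $T_3$ in $D[(Q_i\cup Q_j)\setminus W']$ with exactly one vertex in $Q_i$ and two in $Q_j$. We also need the version with one vertex in $Q_j$ and two in $Q_i$.

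To prove the claim, suppose $(Q_i,Q_j)$ is an arc. By \Cref{def-CQ}, the set $S=\{v\in Q_i:|N^+(v)\cap Q_j|\ge qn\}$ has size at least $qn>|W'|$, so we may pick $v\in S\setminus W'$. The set $N^+(v)\cap Q_j\setminus W'$ then has more than $qn-(q-\gamma)n=\gamma n\ge\alpha(D)$ vertices. It is therefore not independent, and it contains an arc $uw$. Since $v$ sends arcs to both $u$ and $w$, the vertices $v,u,w$ form a transitive triangle, with $v$ as the source.

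For two vertices in $Q_i$ and one in $Q_j$, we use the second condition of \Cref{def-CQ}. Pick $u\in Q_j\setminus W'$ with $|N^-(u)\cap Q_i|\ge qn$. The set $N^-(u)\cap Q_i\setminus W'$ again has more than $\gamma n$ vertices, so it contains an arc $xy$, and $x,y,u$ form a $T_3$ with $u$ as the sink.

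Hence, whichever direction the arc between consecutive sets has, we can place either the one-vertex side or the two-vertex side in either set. This matters because $P$ is undirected.

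Now we build $\mathcal{T}$ along the path. For each step $t=1,\dots,\ell$, we choose a $T_3$, say $T_t$, on the pair $\{Q_{t-1},Q_t\}$ with one vertex in $Q_{t-1}$ and two vertices in $Q_t$. At step $t$ we apply the claim with $W'=W\cup V(T_1)\cup\dots\cup V(T_{t-1})$. The $T_3$'s are vertex disjoint by construction, and $|\mathcal{T}|=\ell=|A(P)|$.

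We then count the intersections. The set $Q_0=Q$ receives one vertex, from $T_1$. The set $Q_\ell=Q'$ receives two vertices, from $T_\ell$. Each interior $Q_t$ with $0<t<\ell$ receives two vertices from $T_t$ and one from $T_{t+1}$, so three in total. Since $P$ is a path, its vertices are distinct, and every other $Q''$ receives no vertices. This gives exactly the required intersection pattern.

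The main obstacle is the size condition on $W'$. The claim needs $|W'|<(q-\gamma)n$, but $W'$ grows by $3$ at each step, so we only have $|W'|\le |W|+3\ell$. We handle this by weakening the claim's hypothesis. In the first case it suffices to have $|W'|<qn$ (to find $v\in S\setminus W'$) and $|W'|<(q-\gamma)n$ (so that more than $\gamma n$ out-neighbours survive). The second case needs the same two bounds.

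We therefore use the $\gamma n$ gap in the following way. Since $\ell\le d-1$ and $d$ is a constant while $n$ is large, $3\ell=O(1)$. Any $O(1)$ extra removal costs only $O(1)$ vertices, and the strict inequality $|W|<(q-\gamma)n$ can absorb this once $n$ is sufficiently large relative to $d$. If it cannot (say $|W|$ is extremely close to $(q-\gamma)n$), we check the counting directly. At step $t$, the relevant neighbourhood minus $W'$ has at least $qn-|W|-3(t-1)$ vertices, which exceeds $\gamma n$ up to an additive constant. In the worst case we slightly shrink the constant, which the $\ll$ hierarchy permits.

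Finally, note that the lemma writes $G[V(\mathcal{Q})\setminus W]$; this is simply $D$ restricted to $V(\mathcal{Q})\setminus W$.
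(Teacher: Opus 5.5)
You set up the right construction and your one-arc claim is correct. The construction is the same as the paper's: sweep along $P$, and at step $t$ put one vertex in $Q_{t-1}$ and two in $Q_t$, using the out- or in-neighbourhood condition of Definition~\ref{def-CQ} according to the direction of the arc. The gap is in how you control the forbidden set $W'$. You bound $|W'|\le |W|+3(t-1)$ and try to absorb the additive constant into the strict inequality $|W|<(q-\gamma)n$. That cannot work: the slack $(q-\gamma)n-|W|$ may be smaller than $1$ however large $n$ is. At step $t$ you would then only know $|(N^{+}(v)\cap Q_t)\setminus W'|>\gamma n-3(t-1)$, which need not exceed $\alpha(D)$. ``Slightly shrinking the constant'' is not available either. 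Here $\gamma$ and $q$ are explicit in the hypothesis, and the lemma must hold for every $W$ with $|W|<(q-\gamma)n$, not just for $W$ bounded away from that threshold.

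The missing observation is that the earlier triangles never meet the set where you need the $\gamma n$ count. Since $P$ is a path, $Q_0,\dots,Q_\ell$ are distinct members of $\mathcal{Q}$, hence pairwise disjoint, and $T_1,\dots,T_{t-1}$ lie in $Q_0\cup\cdots\cup Q_{t-1}$. So $W'\cap Q_t=W\cap Q_t$, and on the two-vertex side you lose nothing beyond $W$: $|(N^{\pm}(v)\cap Q_t)\setminus W'|\ge qn-|W|>\gamma n\ge\alpha(D)$. In $Q_{t-1}$ the only extra forbidden vertices are the two vertices of $T_{t-1}$ lying there. On that side you only need one vertex from a set of at least $qn$ candidates, avoiding $W$ and those two vertices, which is possible since $qn-|W|>\gamma n\ge 2$ for large $n$. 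Note that this relies on the already-used vertices falling on the side where you pick a single vertex rather than an arc, which your sweep order guarantees. This is exactly the paper's remark that, because the $Q_i$ are pairwise disjoint, earlier triangles ``do not affect the degree condition.'' With it, your proof goes through for the lemma as stated, with no change of constants.
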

\begin{proof}
	Let $P$ be such a path, denote the vertices in $P$ by $Q=Q_{1}, Q_{2}, \dots, Q_{\ell}=Q'$.
	We write an ordered pair $(v,e)$ consisting of a vertex $v$ and an arc $e$ to represent the transitive triangle $T_3$ that they form.
	We construct vertex disjoint $T_3$'s $(v_{1}, e_{1}), \dots, (v_{\ell-1}, e_{\ell-1})$ (see Figure \ref{fig-path.}), where $e_{i} \in A(Q_{i+1} \setminus W )$ and $v_{i} \in Q_{i} \setminus (W \cup  V(e_{i}))$, iteratively.
	
	For $i=1$, either $Q_{1}Q_{2}$ or $Q_{2}Q_{1}$ is an arc in $C(\mathcal{Q},q)$. Without loss of generality, we assume $(Q_{1},Q_{2})$ is the arc; the other case follows by symmetry. This implies that there are at least $q n$ vertices in $Q_{1}$ each having at least $q n$ out-neighbors in $Q_{2}$. 
	Since $|W| < (q-\gamma)n$, there exists a vertex $v_{1} \in Q_{1} \setminus W$ such that $|N^+(v_{1}) \cap Q_{2}| \geq q n$, i.e., $|(N^+(v_{1}) \cap Q_{2})\setminus W | > \gamma n$.  
	By the condition $\alpha(D) \leq \gamma n$, there exists an arc $e_1 \in A(D[N^+(v_{1})])\cap Q_{2}$ such that $e_{1}$ disjoint from $W$. Clearly, $e_1$ and $v_1$ forms a $T_3$. 
	For $i\ge 2$, similarly choose $v_{i} \in Q_{i} \setminus (W \cup V(e_{i-1}))$ and $e_{i} \in A(Q_{i+1} \setminus W)$. Since $Q_{1}, Q_{2}, \dots, Q_{\ell}$ are pairwise disjoint from each other, they do not affect the degree condition. The final set of $T_3$'s $\mathcal{T} = \{v_i e_i : 1 \leq i \leq \ell-1\}$ is as desired.
\end{proof}

\subsection{Sketch proof of Theorem \ref{thm-T3}}
The proof of Theorem \ref{thm-T3} employs an absorption method, following the idea of \cite{Balogh}, and consists of two main steps. In contrast to the conventional approach, our absorbing set $U$ comprises two disjoint vertex sets: an absorber $X$ and an absorbable collection $\mathcal{Z}$.
First, we apply the Absorbing Lemma (Lemma \ref{lem-absorb}) to construct a special absorbing set $U=X\cup V(\mathcal{Z})$. This set has the property that any small subset $W$ of $V(D)\setminus U$ can be “absorbed” by $U$ to form a perfect $T_3$-tiling of $D[W\cup U]$. 
Second, we apply the Almost-covering Lemma (Lemma \ref{lem-cover}) to the remaining part of the digraph (after removing $U$). This lemma allows us to cover almost all vertices with disjoint copies of $T_3$, leaving only a small set of uncovered vertices.
Finally, since the uncovered set is small, we apply the absorbing property of $U$ to incorporate these remaining vertices. Hence, a perfect $T_3$-tiling of the entire digraph is obtained, thereby completing the proof.
\section{Absorbing Lemma}
	The constants used throughout this section satisfy the following relations. 
	\begin{equation*}
		0<\gamma \ll \zeta \ll \beta \ll \eta \ll \sigma \ll \phi \ll \psi \ll \varepsilon <1/8.
	\end{equation*}
Inspired by the Absorbing Lemma in \cite{Balogh}, we present the following digraph version, obtained by considering $T_3$-tilings under conditions of total degree $\delta(D)$ and independence number $\alpha(D)$.
\begin{lemma}[Absorbing Lemma]\label{lem-absorb}
	Given constants $0<\gamma \ll \zeta \ll \sigma \ll \varepsilon<1/8$, let $D$ be an $n$-vertex digraph with $\delta(D) \geq (1+\varepsilon)n$ and $\alpha(D) \leq \gamma n$.  
	There exists $U \subseteq V(D)$ such that $|U| \leq \sigma n$ and, for each $W \subseteq V(D)\setminus U$ that satisfies $|W|\le \zeta n$ and $|W|$ is divisible by $3$, $D[U \cup W]$ has a perfect $T_3$-tiling.
\end{lemma}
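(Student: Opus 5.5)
We follow the Balogh--Molla--Sharifzadeh scheme, adapted to $T_3$ via the definitions above. The plan has four steps: (i) find a $(\psi,\phi,k)$-linked partition $\mathcal{P}=\{V_1,\dots,V_d\}$ of $V(D)$ for some bounded $k$; (ii) build a small absorber $X$ for $\mathcal{P}$; (iii) fix an absorbable collection $\mathcal{Z}$ and set $U=X\cup V(\mathcal{Z})$; (iv) given $W$, adjust $W$ together with part of $V(\mathcal{Z})$ into sets whose residues can be absorbed by $X$.

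\textbf{Step 1: linked vertices and the partition.}
Each vertex $v$ has degree at least $(1+\varepsilon)n$, so $d^{\Delta}(v)\ge (1+\varepsilon)n/2$. Inside $N^{\Delta}(v)$, the bound $\alpha(D)\le\gamma n$ gives $\Omega(n^2)$ arcs, since we may greedily take arcs and by Tur\'an each $\gamma n$-set spans an arc. Each such arc forms a $T_3$ with $v$. So every vertex lies in $\Omega(n^2)$ copies of $T_3$.

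Consider two vertices $x,y$ and a common neighbour in the relevant direction, i.e. an arc $e$ with $x\cup e$ and $y\cup e$ both $T_3$'s. Then $V(e)$ is a $1$-linking set. Let $F$ be the bipartite graph between $V(D)$ and the arcs of $D$, where $v$ is joined to $e$ when $v,e$ form a $T_3$.

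Apply Lemma \ref{lem-bi-M} to $F$ (or to its vertex--vertex shadow, using $N^{\Delta}$). This gives clusters $S_i$ in which almost all pairs share $\psi^4$-many common arcs, i.e. are $(c,1)$-linked. Then apply Lemma \ref{lem-path to T3}'s cousin, Lemma \ref{pro-Lconnect}, repeatedly, to upgrade "almost all pairs linked" into "all pairs linked" with $k\le 2^{O(1)}$ and $c$ shrinking to $\phi$. Leftover vertices (at most $\psi n$) are assigned to a cluster where they have $\Omega(n)$ linked partners. This again works by Lemma \ref{pro-Lconnect}, since the degree condition forces each vertex to be $(c,1)$-linked to $\Omega(n)$ vertices. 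The result is a $(\psi,\phi,k)$-linked partition.

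\textbf{Step 2: the absorber $X$.}
For each $I\in\mathcal{I}_\phi$, a $T_3$ $v_1v_2v_3$ with $v_j\in V_{i_j}$ together with linking sets lets us swap any triple $(z_1,z_2,z_3)$, $z_j\in V_{i_j}$, for $(v_1,v_2,v_3)$. Choose a random family: include each candidate gadget ($T_3$ plus three $k$-linking sets, $3+3(3k-1)$ vertices) independently with probability $p\approx \eta' n^{-(9k-1)}$. By Lemma \ref{thm-Chernoff bound} and a deletion argument, we get $\Omega(\eta n)$ disjoint gadgets for every triple $(z_1,z_2,z_3)$. Greedy matching then yields $X$ with $|X|\le\sigma n/2$ that absorbs any $(\mathcal{P},\phi,\eta)$-absorbable $\mathcal{Z}$.

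\textbf{Step 3: fixing $\mathcal{Z}$.}
Fix disjoint sets $Z^I_j\subseteq V_{i_j}\setminus X$ of equal size about $\beta n$. These will not be absorbed directly; they act as a reservoir.

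\textbf{Step 4 (main obstacle): absorbing $W$.}
Write $W_i=W\cap V_i$. We must remove disjoint $T_3$'s from $W\cup V(\mathcal{Z})$ so that the remainder splits into triples of pattern $I\in\mathcal{I}_\phi$ with the balanced multiplicities required by Definition \ref{def-absorber}. This is a divisibility and lattice problem on the index vectors.

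The first thing I would try is the path-moving trick. Build the auxiliary digraph $C(\mathcal{P},q)$ of Definition \ref{def-CQ}, and show it is connected using $\delta(D)\ge(1+\varepsilon)n$. Two clusters with no $C$-arcs in either direction would give a near-disconnected structure as in the extremal example. The excess $\varepsilon n$ rules this out, since each vertex has $\varepsilon n/2$ neighbours outside a set of size at most $n/2$ in each direction.

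Lemma \ref{lem-path to T3} then lets us shift one unit of residue from $Q$ to $Q'$ modulo $3$ while avoiding previously used vertices. Repeat this until every $|W_i\cup(\text{used parts})|$ matches the triple structure of $\mathcal{I}_\phi$. Then cover $W$ by $T_3$'s whose index triples lie in $\mathcal{I}_\phi$, using $\alpha(D)\le\gamma n$ and the linking to replace vertices. Finally, feed the leftover balanced sets to $X$, so that $D[U\cup W]$ has a perfect $T_3$-tiling. Ensuring that the multiplicities $t(\phi,i)$ permit any residue vector is the delicate point. I expect to handle it by showing every $i$ lies in some $I\in\mathcal{I}_\phi$ with $\nu_I(i)\ge1$, again via the degree bound.
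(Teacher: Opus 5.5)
Your Steps 1 and 2 follow the paper's route: Lemma~\ref{lem-Pexist}, and the absorber of Lemma~\ref{lem-P to X}, which the paper imports from Balogh--Molla--Sharifzadeh. One constant needs fixing in Step 2. Each triple needs more than $|\mathcal{I}_\phi|\,\eta n$ available gadgets, not merely $\Omega(\eta n)$, because an absorbable collection may contain that many triples. This is harmless since $\eta\ll\sigma$.

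\textbf{The gap is in Steps 3--4.} Every $T_3$ you remove while absorbing $W$ must lie inside $U\cup W$. Yet you fix $\mathcal{Z}$ arbitrarily and then shift residues along paths of $C(\mathcal{P},q)$, using Lemma~\ref{lem-path to T3} with $\mathcal{Q}=\mathcal{P}$. That lemma builds its $T_3$'s from arbitrary vertices of the clusters $V_i$, and you cannot confine it to $U\cup W$. Its forbidden set must have size below $(q-\gamma)n$, whereas $V(D)\setminus(U\cup W)$ has size at least $(1-\sigma-\zeta)n$.

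Covering $W$ ``using $\alpha(D)\le\gamma n$'' has the same problem. It needs each $w\in W$ to have more than $\gamma n+2|W|$ out- or in-neighbours inside a single reservoir set. An arbitrarily fixed $\mathcal{Z}$ does not give this: $W$ is chosen after $U$, and you only know $|N(w)|\ge(1+\varepsilon)n/2$. So $w$ may miss the tiny set $V(\mathcal{Z})$ entirely. Placing $w$ directly into the absorbable collection (legal, since $w\notin X$) does not help. The resulting imbalance must again be removed by $T_3$'s inside $W\cup V(\mathcal{Z})$, and nothing about your $\mathcal{Z}$ guarantees that such $T_3$'s exist.

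\textbf{The missing idea} is to choose the reservoir at random so that it inherits the degree structure of $D$; this is the paper's Lemma~\ref{lem-absorbable collection Z}. Let each $Z_j^I$ be a random $\lfloor\eta n\rfloor$-subset of $V_{i_j}\setminus X$. Apply hypergeometric Chernoff bounds with a union bound over the $n+d$ sets $N^+(v)\cap V_i'$ and $W^+_{(i,j)}=\{v\in V_i': d^+(v,V_j)\ge\tau n\}$. This gives three properties.
(i) Every vertex has at least $\beta n$ out-neighbours in some $Z_j^I$. This uses $t(\phi,i)\ge 1$, which is all your ``delicate point'' actually buys.
(ii) For each $I\in\mathcal{I}_\phi$, the sets $Z_1^I,Z_2^I,Z_3^I$ are pairwise adjacent in $C(\mathcal{Z},\beta)$. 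This holds because $t(\mathcal{P},I)>\phi n^3$ forces at least $\phi n^2$ arcs between the corresponding clusters.
(iii) $C(\mathcal{Z},\beta)$ is connected, inherited from the connectivity of $C(\mathcal{P},\tau)$. That is where your degree argument belongs.

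With these properties, everything stays inside $U\cup W$:
\begin{enumerate}
\item Cover each $w$ by $w$ together with an arc of $N^+(w)\cap Z_j^I$.
\item Apply Lemma~\ref{lem-path to T3} with $\mathcal{Q}=\mathcal{Z}$, along paths of $C(\mathcal{Z},\beta)$, to make $|Z_1^{I*}\cup Z_2^{I*}\cup Z_3^{I*}|\equiv 0\pmod 3$ for every $I$.
\item Equalize the three parts of each $I$ using $T_3$'s split $2+1$ between two of its parts. These exist by (ii) and $\alpha(D)\le\gamma n$.
\end{enumerate}
No lattice or cone condition on the vectors $\nu_I$ is needed. The absorber only requires $|Z_1^{I*}|=|Z_2^{I*}|=|Z_3^{I*}|$ for each $I$ separately.
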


We prove Lemma \ref{lem-absorb} by the following steps.
First, by the degree condition, we construct a linked partition $\mathcal{P}$ of $V(D)$ (Lemma~\ref{lem-Pexist}). Using this partition $\mathcal{P}$, we obtain an absorber $X$ (Lemma~\ref{lem-P to X}) and then show the existence of an absorbable collection $\mathcal{Z}$ with respect to $X$ (Lemma~\ref{lem-absorbable collection Z}). Finally, for any small set $W$ disjoint from $X$ and $\mathcal{Z}$, we use $\mathcal{Z}$ to absorb $W$, thereby forming a partial $T_3$-tiling. The remaining unused vertices in $\mathcal{Z}$ form a new absorbable collection together with some copies of $T_3$. By the definition of an absorber, this yields a perfect $T_3$-tiling of $D[W\cup X\cup V(\mathcal{Z})]$.

\begin{lemma}\label{lem-Pexist}
	Given constants $0<  \phi \ll \psi \ll \mu < \varepsilon <1/8$, let $D$ be a digraph with sufficiently large order $n$. If $\delta(D) \geq(1+\varepsilon) n$, then there exists a $(\psi, \phi, 6)$-linked partition $\mathcal{P}=\{V_{1}, \dots, V_{d}\}$ of $V(D)$ for some $d \leq \varepsilon^2 /(4 \psi)$.
\end{lemma}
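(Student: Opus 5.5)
We follow the strategy of \cite{Balogh}: encode the $T_3$-structure of $D$ in an auxiliary bipartite graph, apply Lemma~\ref{lem-bi-M} to obtain large clusters of vertices in which almost every pair is $1$-linked, and then upgrade this to full linkedness with Lemma~\ref{pro-Lconnect}.

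\textbf{Step 1: the auxiliary bipartite graph.} Let $F$ be the bipartite graph with classes $A=A(D)$ and $B=V(D)$. A vertex $v\in B$ is joined to an arc $e\in A$ when $v$ and $e$ form a $T_3$. We check the two degree hypotheses of Lemma~\ref{lem-bi-M} with some constant $\varepsilon'=\Theta(\varepsilon)$.
\begin{itemize}
\item \emph{Degrees in $B$.} Every arc inside $N^{\Delta}(v)$ forms a $T_3$ with $v$, since $v$ is then a source or a sink. We have $|N^\Delta(v)|\ge (1+\varepsilon)n/2$. Each $u\in N^\Delta(v)$ has total degree at least $(1+\varepsilon)n$ and sends at most $2(n-|N^\Delta(v)|)$ arcs outside $N^\Delta(v)$. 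Hence $D[N^\Delta(v)]$ contains $\Omega(\varepsilon n^2)$ arcs, so $d_F(v)\ge \varepsilon'|A|$, using $|A|\le n^2$.
\item \emph{Degrees in $A$.} Take an arc $e=uw$ and count the arcs between each vertex $x$ and $\{u,w\}$. The total is at least $2(1+\varepsilon)n-4$, and each $x$ contributes at most $4$. Suppose $x$ sends at least $3$ arcs to $\{u,w\}$. Then $x$ is joined by a $2$-cycle to one endpoint, so we may orient that pair to make $\{x,u,w\}$ transitive, and $x$ forms a $T_3$ with $e$. Averaging gives at least $\varepsilon n-2$ such $x$, so $d_F(e)\ge \varepsilon'|B|$.
\end{itemize}

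\textbf{Step 2: clusters and $1$-links.} Apply Lemma~\ref{lem-bi-M} with some $\psi'\ll\varepsilon'^4$, chosen so that $\psi\ll\psi'$ and the final number of parts is at most $\varepsilon^2/(4\psi)$. This gives disjoint sets $S_1,\dots,S_d\subseteq V(D)$ with $|S_i|\ge \psi' n$, covering all but $\psi' n$ vertices. In each $S_i$, at most $\psi'^3n^2$ pairs $b,b'$ have fewer than $\psi'^4|A|$ common $F$-neighbours.

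If $b,b'$ have at least $\psi'^4|A|$ common $F$-neighbours, then each such common arc $e$ is a $1$-linking set for $\{b,b'\}$ of size $3\cdot1-1=2$. As $|A|\ge n^2/2$, the pair $b,b'$ is $(c_1,1)$-linked with $c_1=\Theta(\psi'^2)$. Discard from $S_i$ the at most $O(\psi' n)$ vertices lying in more than $\psi'^{3/2}n$ bad pairs, and call the rest $S_i'$.

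\textbf{Step 3: upgrading to full linkedness.}
\begin{itemize}
\item \emph{Pairs inside $S_i'$.} Any two vertices $y,y'\in S_i'$ are each $1$-linked to all but $\psi'^{3/2}n$ vertices of $S_i'$. So $|L_{c_1,1}(y)\cap L_{c_1,1}(y')|\ge |S_i'|/2$, and Lemma~\ref{pro-Lconnect} makes $y,y'$ $(c_2,2)$-linked.
\item \emph{One leftover vertex.} A leftover vertex $v$ (discarded or uncovered) is $F$-adjacent to at least $\varepsilon'|A|$ arcs. Each such arc is $F$-adjacent to at least $\varepsilon' n$ vertices, most of which lie in $\bigcup S_i'$. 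Double counting with pigeonhole over the $d\le 1/\psi'$ clusters yields an index $i$ for which $|L_{c,1}(v)\cap S_i'|\ge cn$; put $v$ into this cluster. For $y\in S_i'$ we have $S_i'\subseteq L_{c_2,2}(y)\cup\{y\}$, so Lemma~\ref{pro-Lconnect} makes $v,y$ $3$-linked.
\item \emph{Two leftover vertices.} Suppose $v,v'$ are both put into cluster $i$. Vertex $v'$ is $1$-linked to $cn$ vertices of $S_i'$, and $v$ is $3$-linked to all of them, so $v,v'$ are $4$-linked.
\end{itemize}
A final application of Lemma~\ref{pro-Lconnect}, or simple padding with a $T_3$ linked in the obvious way, raises every linkage index to exactly $6$. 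The constant stays $\phi=\Theta(\psi'^{O(1)})\gg$ nothing we need. Adding leftovers only enlarges the parts, so each $V_i\supseteq S_i'$ still has size at least $\psi n$.

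\textbf{Main obstacle.} I expect the hard part to be Step~3, namely finding for each leftover vertex a cluster in which it has linear $1$-linked neighbourhood. This requires controlling, via double counting over arcs in $F$, that the common neighbourhoods land mostly inside $\bigcup S_i'$ rather than in the discarded part. Keeping the constants consistent with $\phi\ll\psi$ and the bound $d\le\varepsilon^2/(4\psi)$ also needs care.
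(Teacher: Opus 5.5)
Your proposal is correct and follows essentially the same route as the paper. Both build the bipartite graph $F$ of vertex--arc $T_3$-incidences, apply Lemma~\ref{lem-bi-M}, prune vertices lying in many bad pairs so that each cluster becomes $2$-linked via Lemma~\ref{pro-Lconnect}, put each leftover vertex into a cluster where it has linearly many $1$-linked partners, and then raise the linkage index to $6$.

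Your choice to run Lemma~\ref{lem-bi-M} with an auxiliary $\psi'=\mathrm{poly}(\varepsilon)\gg\psi$ and to prune at threshold $\psi'^{3/2}n$ is what makes $d\le 1/\psi'\le\varepsilon^2/(4\psi)$ and the leftover pigeonhole step actually go through; the paper applies Lemma~\ref{lem-bi-M} directly with $2\psi$ and never justifies the bound on $d$. One fix: state the pruning count as at most $2\psi'^{3/2}n$ per cluster rather than $O(\psi' n)$, because you need the total leftover set, at most $\psi' n+2\psi'^{1/2}n$, to be small compared with $\varepsilon' n$.
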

\begin{proof}
	Let $D$ be a digraph satisfies the condition of the lemma.\smallskip\\
	\textbf{Step 1.} The construction of an auxiliary bipartite graph $F$.
	
	The bipartite graph $F$ with parts $A(D)$ and $V(D)$ such that $e v \in E(F)$ if an arc $e\in A(D)$ and a vertex $v\in V(D)$ form a $T_3$ in $D$. Note that the auxiliary graph $F$ here is undirected and it reflect the $(\varepsilon^{2}/2, 1)$-linked relationships between the vertices (see Figure \ref{fig-F.}). 
	
	\begin{figure}
		\centering
		\includegraphics[scale=0.7]{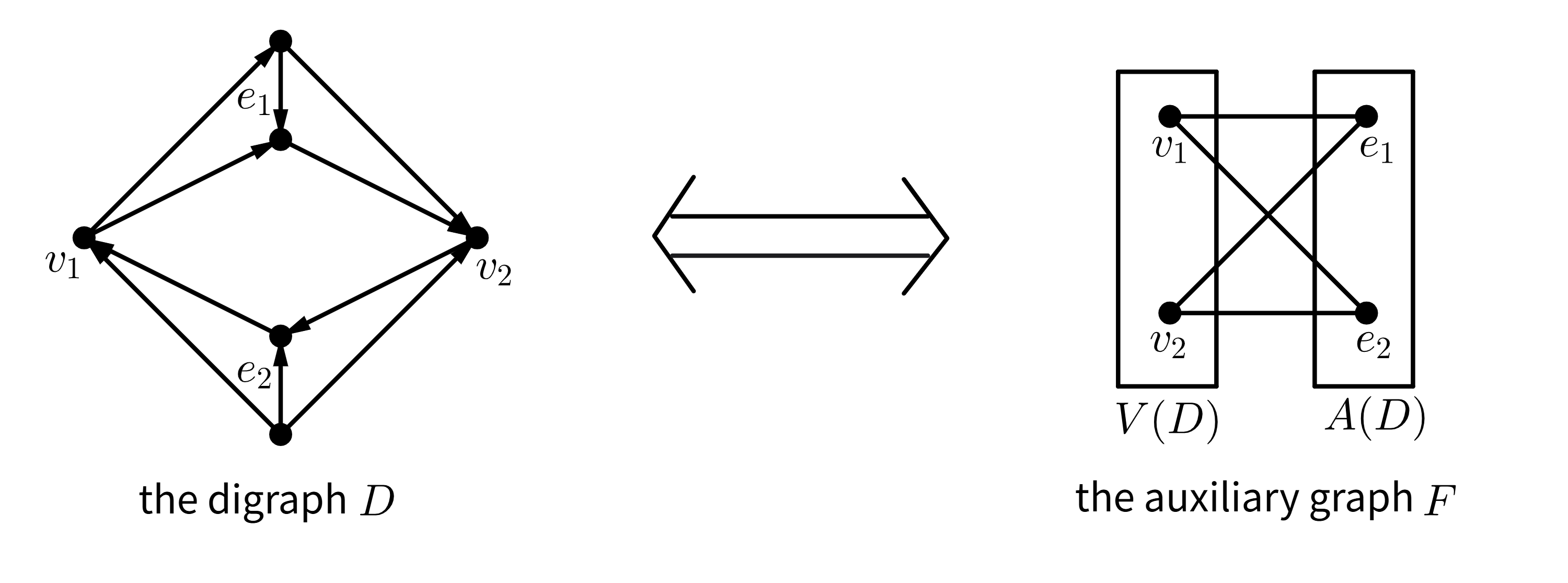}
		\caption{The auxiliary graph $F$ illustrating the $(\varepsilon^{2}/2, 1)$-linked relationships among vertices.}\label{fig-F.}
	\end{figure}

	Next, we calculate the degrees (in $F$) of the vertices in $V(D)$ and the arcs in $A(D)$.
	Recall that $N^{\Delta}(v)$ denotes the larger of the out-neighborhood and in-neighborhood of a vertex $v$, and $d^{\Delta}(v)=|N^{\Delta}(v)|$. By $\delta(D) \geq(1 +\varepsilon) n$, we have $d^{\Delta}(v)\ge\frac{1}{2}\delta(D)= \frac{1}{2}(1 +\varepsilon) n$ for every vertex $v$. For any $u$ in $N^{\Delta}(v)$, there is $|N(u)|\ge\frac{1}{2}\delta(D)= \frac{1}{2}(1 +\varepsilon) n$. 
	Then $|N^{\Delta}(v) \cap N(u)|\ge |N^{\Delta}(v)|+|N(u)|-n\ge \varepsilon n$. Summing over all $u$ in $N^{\Delta}(v)$ and noting that each arc in the subgraph induced by $N^{\Delta}(v)$ is counted at most twice, we obtain that the number of arcs in $D[N^{\Delta}(v)]$ satisfies
	$a(D[N^{\Delta}(v)]) \ge |N^{\Delta}(v)|\cdot \frac{1}{2}|N^{\Delta}(v)\cap N(u)|=\frac{1}{4}(1 +\varepsilon) \varepsilon n^2.$
	Every such arc, together with $v$, forms a transitive triangle $T_3$; consequently, $v$ is adjacent in the auxiliary graph $F$ to all these arcs, yielding $d_{F}(v) \geq\frac{1}{4}(1 +\varepsilon) \varepsilon n^2\ge \frac{1}{2}\varepsilon|A(D)|$.
	
	By the construction of $F$, for each arc $e = v_1 v_2 \in A(D)$, every vertex in $N^{+}(v_1) \cap N(v_2)$ or in $N(v_1) \cap N^{-}(v_2)$ is adjacent to $e$ in $F$. Note that $|N(v)|\ge |N^-(v)| (|N^+(v)|)$ and $|N^+(v)|+|N^-(v)|= d(v)\ge \delta(D) \geq(1 +\varepsilon) n$ for every vertex $v \in V(D)$. Then we have
	\begin{equation*}
		\begin{aligned}
			d_{F}(e)\ge &\frac{|N^+(v_1)\cap N(v_2)|+|N(v_1)\cap N^-(v_2)|}{2}-1\\
			\ge &\frac{|N^+(v_1)|+| N(v_2)|+|N(v_1)|+| N^-(v_2)|-2n}{2}-1\ge \varepsilon n-1.
		\end{aligned}
	\end{equation*}
	Choose $\mu=\varepsilon/2$ such that $0<2\psi<\mu^{4} / 64$.
	So $d_{F}(v) \geq\mu|A(D)|$ for each $v \in V(D)$ and $d_{F}(e) \geq \mu|V(D)|$ for each $e \in A(D)$.\smallskip\\
	\textbf{Step 2.} The initial partition of $V(D)$.
	
	By replacing $\varepsilon$ with $\mu$ and $\psi$ with $2\psi$ in Lemma \ref{lem-bi-M}, we can obtain a pairwise disjoint collection of vertex sets $\{V_{1}', \dots, V_{d}'\}$ of $V(D)$ that satisfies the properties stated in this lemma.
	Let $R_0=V(D) \setminus (\bigcup_{i=1}^{d} V_{i}')$. Then $|R_0| \leq2 \psi n$.  For all $i \in[d]$, we also have $|V_{i}'| \geq2 \psi n$ and all but at most $(2 \psi)^{3} n^{2}$ pairs $v, v' \in V_{i}'$ satisfy
	\begin{equation}\label{eq-NF}
		|N_{F}(v) \cap N_{F}(v')| \geq(2 \psi)^{4} n^{2}.
	\end{equation}
If $v$ and $v'$ satisfy condition (\ref{eq-NF}), then $v'\in L_{(2 \psi)^{2}, 1}(v)$. By the definition of $L_{(2 \psi)^{2}, 1}(v)$, each such vertex can be used to absorb $v$.
For each $i\in[d]$, we delete all vertices $v\in V'_i$ satisfying $|L_{(2 \psi)^{2}, 1}(v)\cap V'_i| < 8\psi^2 n$ and denote the resulting vertex set by $V_i^*$. Furthermore, let $R_i=V'_i\setminus V_i^*$.
For any pair of $v, v' \in V^*_{i}$, by $v, v' \notin R_{i}$,
	\begin{equation*}
		|L_{(2 \psi)^{2}, 1}(v) \cap L_{(2 \psi)^{2}, 1}(v')\cap V'_i| \geq 2 \cdot(|V_{i}'|-8 \psi^{2} n)-|V_{i}'| \geq 27 \phi n.
	\end{equation*}
	By Lemma \ref{pro-Lconnect} with $k_{1}=1$, $k_{2}=1$, $c=27 \phi$ and $c_{1}=c_{2}=4 \psi^{2}$, $v$ and $v'$ are $(9 \phi, 2)$-linked. Therefore, $V^*_{i}$ is $(9 \phi, 2)$-linked.\smallskip\\
	\textbf{Step 3.} Handling the remaining vertices.
	
	Before handling the remaining vertices in the set $R = R_0 \cup R_1 \cup \cdots \cup R_d$, we first prove that for every vertex $v \in V(D)$, the set $L_{\varepsilon^{2}/2, 1}(v)$, i.e., the collection of vertices that are $(\varepsilon^2/2,1)$-linked to $v$ has size at least $\frac{1}{3}\varepsilon^2 n$. This allows us to choose an index $i\in [d]$ so that $v$ can be placed into $V^*_{i}$ without affecting its linked property.
	Define
	$$J(v)=\{(w, e): w\in(V(D)\setminus v),\ e\in A(D),\ ve \text{ and } we \text{ form $T_3$}\}.$$
	By the degree of $v$ in $F$, we have 
		$$|J(v)| \geq \sum_{e\in N_{F}(v)}d_F(e)\ge \frac{1}{4}((1 +\varepsilon) \varepsilon n^2) \cdot( \varepsilon n-1) \geq \frac{1}{4}\varepsilon^{2} n^{3}.$$
	On the other hand,
	\begin{equation*}
		|J(v)| \leq(n-|L_{\varepsilon^{2}/2, 1}(v)|) \cdot(\frac{ \varepsilon^{2} n}{2})^{2}+|L_{\varepsilon^{2}/2, 1}(v)||A(D)| \leq \frac{1}{4}\varepsilon^{4} n^{3}+|L_{\varepsilon^{2}/2, 1}(v)| \frac{n^{2}}{2}.
	\end{equation*}
	Recalling that $0<\varepsilon<1 / 8$, then we have $|L_{\varepsilon^{2}/2, 1}(v)|\ge \frac{1}{3}\varepsilon^{2} n$. 
	
	Now, we deal with the vertices in $R$.
	By Lemma \ref{lem-bi-M}, at most $(2 \psi)^{3} n^{2}$ pairs $v, v' \in V_{i}'$ do not satisfy (\ref{eq-NF}). So $|R_{i}| \leq \frac{8 \psi^{3} n^{2}}{8 \psi^{2} n}=\psi n$. It also means that $|V^*_{i}| \geq \psi n \text{ for every } i \in[d].$
	
	For each $v \in R$, the number of vertices that are $(\varepsilon^2/2, 1)$-linked to $v$ is at least $\frac{1}{3}\varepsilon^2 n$. Therefore, there exists an index $i \in [d]$ such that 
	\begin{equation*}
		|\{u \in V^*_{i}: u \text{ and } v \text{ are } (\varepsilon^{2}/2, 1) \text{-linked} \}| \geq \frac{\frac{1}{3} \varepsilon^{2} n-|R_0|}{d}-|R_{i}| \geq 9 \phi n.
	\end{equation*}
	Then we put $v$ into a set $W_i$ where $i\in [d]$. From the arbitrariness of $v$, we can construct a partition (that may contain empty sets) $\{W_{1}, \dots, W_{d}\}$ of $R$ such that for each $i \in[d]$ and each $w \in W_{i}$, there is $|L_{\varepsilon^{2}/2, 1}(w) \cap V^*_{i}| \geq9 \phi n$.
	Recalling that $V^*_{i}$ is $(9 \phi, 2)$-linked, by Lemma \ref{pro-Lconnect}, for each $w \in W_{i}$, $V^*_{i} \cup \{w\}$ is $(3 \phi, 3)$-linked (also $(\phi, 6)$-linked). 
	For each pair of $w, w' \in W_{i}$, by $|V^*_{i}| \geq3 \phi n$, $w$ and $w'$ are $(\phi, 6)$-linked. 
	Let $V_{i}=V^*_{i} \cup W_{i}$ for every $i \in[d]$ and $\mathcal{P}=\{V_{1}, \dots, V_{d}\}$. 
	Since $|V^*_{i}| \geq \psi n $, we have $|V_{i}| \geq \psi n \text{ for every } i \in[d]$.
	$\mathcal{P}$ is a $(\psi, \phi, 6)$-linked partition of $V(D)$.
\end{proof}
The following Lemma~\ref{lem-P to X} shows how to obtain an absorber $X$ from the partition $\mathcal{P}$. It remains valid for digraphs because its proof depends only on the definitions of a $(\psi, \phi, k)$-linked partition and a $(\mathcal{P}, \phi, \eta)$-absorber. These definitions differ from those in \cite{Balogh} only in that $K_3$ is replaced by $T_3$; hence the lemma's proof carries over without change.
\begin{lemma}\cite{Balogh}\label{lem-P to X}
	For every $k\in \mathbb{N}$ and $0<\eta \ll \sigma \ll \phi \ll \psi \leq 1$, the following holds. Let $D$ be a digraph. If there exists a $(\psi, \phi, k)$-linked partition $\mathcal{P}=\{V_{1}, \dots, V_{d}\}$ of $V(D)$, then there exists a $(\mathcal{P}, \phi, \eta)$-absorber $X \subseteq V(D)$ such that $|X| \leq \sigma n$.
\end{lemma}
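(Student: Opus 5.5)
The plan follows the construction of \cite{Balogh}. Only two features of the definitions are used: linking sets, and the fact that a $(\mathcal{P},\phi,\eta)$-absorbable collection supplies, for every $I\in\mathcal{I}_\phi$, three equal-size sets $Z^I_j\subseteq V_{i_j}$. We build $X$ as a disjoint union $X=X_1\cup X_2$ of two parts, each chosen at random and then cleaned.

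\textbf{Building $X_1$.} Fix $I=(i_1,i_2,i_3)\in\mathcal{I}_\phi$. By (\ref{equ-Imax}) there are more than $\phi n^3$ transitive triangles $v_1v_2v_3$ with $v_j\in V_{i_j}$. For each such triangle and each $j$, pick a $k$-linking set $U_j$ for the pair $\{v_j,z_j\}$, where $z_j$ is any vertex of $V_{i_j}$; this is possible because $V_{i_j}$ is $(\phi,k)$-linked.

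The resulting configurations are absorbers for single triples $(z_1,z_2,z_3)\in V_{i_1}\times V_{i_2}\times V_{i_3}$. Let $S=\{v_1,v_2,v_3\}\cup U_1\cup U_2\cup U_3$. Then $D[S]$ has a perfect $T_3$-tiling: tile $U_j\cup\{v_j\}$ for each $j$. Also $D[S\cup\{z_1,z_2,z_3\}]$ has one: use the triangle $v_1v_2v_3$ and tile each $U_j\cup\{z_j\}$. Counting gives at least $\phi n^3\cdot(\phi n)^{3(3k-1)}$ such configurations for every triple $(z_1,z_2,z_3)$, which is a $\Omega(1)$-fraction of all vertex tuples of length $9k$.

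Now select each $9k$-tuple independently with probability $p=\sigma' n^{1-9k}$. By Lemma \ref{thm-Chernoff bound} and Markov's inequality, with positive probability:
\begin{enumerate}
\item there are $O(\sigma' n)$ selected tuples;
\item $O(\sigma'^2 n)$ pairs of selected tuples intersect;
\item every triple $(z_1,z_2,z_3)$ has $\Omega(\sigma' n)$ selected absorbing configurations.
\end{enumerate}
Delete intersecting tuples and tuples that are not tilable. The remaining tuples are disjoint, and each triple of appropriate type still has at least $\sqrt{\eta}\,n$ private absorbers. Let $X_1$ be the union of the surviving tuples.

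\textbf{Handling general absorbable collections.} Given an absorbable $\mathcal{Z}$, split each triple of equal-size sets $Z^I_1,Z^I_2,Z^I_3$ into $|Z^I_1|\le\eta n$ transversal triples. The total number of such triples is at most $|\mathcal{I}_\phi|\eta n\le d^3\eta n$. Assign absorbers to them greedily, each taking a fresh configuration; this works since $d^3\eta n<\sqrt\eta\, n$. Every unused configuration tiles by itself, so $D[X_1\cup V(\mathcal{Z})]$ has a perfect tiling.

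\textbf{Obstacle.} The main obstacle is that Definition \ref{def-absorber} only requires $\mathcal{Z}$ to consist of disjoint sets $Z^I_j$. It does not require $\mathcal{Z}$ to consist only of these sets, and the sets are not necessarily disjoint from other parts of the absorption. Equal sizes per $I$ are exactly what makes the transversal splitting possible. I would check that no extra sets in $\mathcal{Z}$ are allowed. If they were, for instance leftover sets of size not divisible by $3$, I would add a second reservoir $X_2$, built analogously from single-vertex linking sets inside each $V_i$ using the $(\phi,k)$-linked property, to absorb them.

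\textbf{Size bound.} Finally, $|X|\le 9k\cdot O(\sigma' n)\le\sigma n$ after choosing $\eta\ll\sigma'\ll\sigma$.
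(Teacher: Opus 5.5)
Your argument is correct and is essentially the \cite{Balogh} proof that the paper invokes without change. It uses $9k$-vertex absorbing configurations made of a $T_3$ of type $I$ plus three $k$-linking sets, a sparse random selection with deletion of overlapping or non-tilable tuples, and then a split of each equal-size triple $Z^I_1,Z^I_2,Z^I_3$ into transversal triples that are absorbed greedily by distinct configurations.

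Your $X_2$ contingency is unnecessary, and it could not help anyway. An absorbable collection is meant to be exactly $\{Z^I_j : I\in\mathcal{I}_\phi,\ j\in[3]\}$, which is how the paper later uses $\mathcal{Z}$ and $\mathcal{Z}'$. Under a literal reading that allows extra sets, the statement itself would be false: adding a singleton to $\mathcal{Z}$ changes $|X\cup V(\mathcal{Z})|$ modulo $3$, so no choice of $X$ can absorb both collections.
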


By Lemmas~\ref{lem-Pexist} and~\ref{lem-P to X}, we obtain a $(\mathcal{P}, \phi, \eta)$-absorber $X$ of $D$. We now prove that there exists a $(\mathcal{P}, \phi, \eta)$-absorbable collection $\mathcal{Z}$ with respect to $X$.
For each $I=(i_1,i_2,i_3) \in \mathcal{I}_{\phi}$ and $j \in[3]$, we let $f : \mathcal{I}_{\phi} \times [3] \to [d]$ be a function with $f(I,j)=i_j$.
Recall that in such a collection $\mathcal{Z}$, each set $Z^I_j$ is a subset of $V_{f(I,j)}$, corresponding to the coordinate $i_j$ of $I$.
We use $C(\mathcal{Z}, \beta)$ to denote the relationships among the elements of $\mathcal{Z}$, as defined in Definition~\ref{def-CQ}.
In the following lemma, property (i) ensures that $\mathcal{Z}$ can absorb any given vertex of $V(D)$, while properties (ii) and (iii) ensure that after $\mathcal{Z}$ absorbs some vertices, the remaining vertices in $V(\mathcal{Z})$ can be rearranged to form some vertex-disjoint copies of $T_3$ together with a new $(\mathcal{P},\phi,\eta)$-absorbable collection.
We then establish the existence of such a collection $\mathcal{Z}$ that is $(\mathcal{P},\phi,\eta)$-absorbable with respect to $X$.

\begin{lemma}\label{lem-absorbable collection Z}
	Let $n$ sufficiently large. Given $k\in \mathbb{N}$ and constants $0< \beta \ll \eta \ll \sigma \ll \phi \ll \psi \ll \varepsilon <1/8$, let $D$ be an $n$-vertex digraph with $\delta(D) \geq(1 +\varepsilon) n$ and $\mathcal{P}=\{V_{1}, \dots, V_{d}\}$ be a $(\psi, \phi, 6)$-linked partition of $V(D)$. For any $X \subseteq V(D)$ with $|X| \leq \sigma n$, there exists an $(\mathcal{P}, \phi, \eta)$-absorbable collection $\mathcal{Z}$ with respect to $X$ such that $|Z^I_j|=\lfloor\eta n\rfloor$ for any $I \in \mathcal{I}_{\phi}$ and $j \in[3]$ and satisfies:
	\begin{enumerate}
	\item[$($i$)$] for each $v \in V$, there exists a $Z^I_j\subset V_{i_j}$ such that $d^+_D(v, Z^I_j) \geq\beta n$, where $I \in \mathcal{I}_{\phi}$ and $j \in[3]$,
	\item[$($ii$)$] for each $I \in \mathcal{I}_{\phi}$, $Z^I_1 Z^I_2 Z^I_3$ forms a $T_3$ in $C(\mathcal{Z}, \beta)$, and
	\item[$($iii$)$] the digraph $C(\mathcal{Z}, \beta)$ is connected.
	\end{enumerate}
\end{lemma}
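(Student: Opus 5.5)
The plan is to choose all the sets $Z^I_j$ at random, as disjoint random subsets of the parts $V_i\setminus X$, and then verify (i)--(iii) by Chernoff bounds (Lemma~\ref{thm-Chernoff bound}) and union bounds. The key point is that every property we need depends only on \emph{pairs of parts} $(V_i,V_{i'})$ and on the degree condition, not on the particular sample. First, $|\mathcal I_\phi|\le d^3\le\psi^{-3}$. Second, $|V_i\setminus X|\ge\psi n-\sigma n\ge \psi n/2$. Third, every $i$ appears in at most $3d^3$ pairs $(I,j)$, so at most $3d^3\lfloor\eta n\rfloor\le \psi n/4$ vertices of $V_i$ are needed, because $\eta\ll\psi$. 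Hence, for each $i$, we can take a uniformly random ordered family of disjoint $\lfloor\eta n\rfloor$-subsets of $V_i\setminus X$ and assign one to each $(I,j)$ with $f(I,j)=i$. Each individual $Z^I_j$ is then a uniform random subset of $V_{i_j}\setminus X$, so hypergeometric concentration applies to it.

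\textbf{Every index appears in $\mathcal I_\phi$, and the index structure is connected.} In Step~1 of Lemma~\ref{lem-Pexist} we showed $d_F(v)\ge \varepsilon n^2/4$. So every vertex lies in at least $\varepsilon n^2/12$ copies of $T_3$, and at least $\psi\varepsilon n^3/36$ copies of $T_3$ meet $V_i$. These are spread over at most $3d^2$ triples containing $i$, so some triple containing $i$ lies in $\mathcal I_\phi$, since $\phi\ll\psi,\varepsilon$.

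For connectivity, let $[d]=S\cup \bar S$ be a nontrivial split, with $U=\bigcup_{i\in S}V_i$ the smaller side, so $|U|\le n/2$. Each $u\in U$ sends at least $(1+\varepsilon)n-2|U|\ge\varepsilon n$ arcs out of $U$. Hence some pair $i\in S$, $i'\in\bar S$ has at least $\varepsilon\psi^3 n^2$ arcs between $V_i$ and $V_{i'}$. Each arc $e$ lies in at least $d_F(e)\ge\varepsilon n-1$ copies of $T_3$. Pigeonholing over the $O(d^3)$ triples then gives some $I\in\mathcal I_\phi$ containing both $i$ and $i'$. So the graph $\Gamma$ on $[d]$, in which $i\sim i'$ whenever they co-occur in a triple of $\mathcal I_\phi$, is connected.

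\textbf{Properties (ii) and (iii).} Call an ordered pair of indices $(i,i')$ \emph{dense} if at least $\phi n/6$ vertices of $V_i\setminus X$ have at least $\phi n/6$ out-neighbours in $V_{i'}\setminus X$. Take $I=(i_1,i_2,i_3)\in\mathcal I_\phi$, with $T_3=v_1v_2v_3$ oriented $v_1\to v_2$, $v_1\to v_3$, $v_2\to v_3$. From more than $\phi n^3$ such triangles, minus the at most $3\sigma n^3$ that meet $X$, there are at least $\phi n^2/2$ arcs from $V_{i_1}\setminus X$ to $V_{i_2}\setminus X$. The same holds for $(i_1,i_3)$ and $(i_2,i_3)$, so these three pairs are dense.

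For a dense pair and any two random sets $Z\subseteq V_i$, $Z'\subseteq V_{i'}$ from our construction, we apply Chernoff twice. First, every good vertex $v$ has $|N^+(v)\cap Z'|\ge \eta\phi n/12\ge\beta n$ with probability $1-e^{-\Omega(n)}$. Second, $Z$ contains at least $\eta\phi n/12\ge\beta n$ good vertices. The in-condition of Definition~\ref{def-CQ} is obtained symmetrically, by counting vertices of $V_{i'}$ with many in-neighbours in $V_i$. A union bound over the $O(d^6)$ pairs of sets shows that whp $(Z,Z')\in A(C(\mathcal Z,\beta))$ for \emph{every} dense pair $(i,i')$ and \emph{every} choice of $Z,Z'$. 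This gives (ii) directly.

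It also gives (iii). If $i\sim i'$ in $\Gamma$, then one of $(i,i'),(i',i)$ is dense, so every set in $V_i$ is adjacent in $C(\mathcal Z,\beta)$ to every set in $V_{i'}$. Two sets lying in the same $V_i$ share a common neighbour in some $V_{i'}$, which exists because $\Gamma$ is connected and $d\ge2$; the case $d=1$ is handled by the same density argument with $i=i'$. Connectivity of $\Gamma$ then transfers to $C(\mathcal Z,\beta)$.

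\textbf{Property (i), the main obstacle.} The difficulty is that the statement asks for \emph{out}-degree. This is settled by the degree condition itself: $d^-(v)\le n-1$, so $d^+(v)\ge(1+\varepsilon)n-(n-1)>\varepsilon n$ for every $v$. Hence some part $V_i$ has $|N^+(v)\cap (V_i\setminus X)|\ge (\varepsilon n-\sigma n)/d\ge \varepsilon\psi n/2$, using $d\le 1/\psi$ and $\sigma\ll\varepsilon\psi$. By the first step, $i=i_j$ for some $(I,j)$, and $Z^I_j$ is uniform in $V_i\setminus X$. Chernoff then gives $d^+(v,Z^I_j)\ge \eta\varepsilon\psi n/4\ge\beta n$ with failure probability $e^{-\Omega(n)}$, and a union bound over all $v$ finishes the argument.
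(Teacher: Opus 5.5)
Your proof is correct, and its skeleton is the paper's. You use the same random choice of disjoint $\lfloor\eta n\rfloor$-subsets of each $V_i\setminus X$ with hypergeometric Chernoff and union bounds. You use the same triangle count to show that every index occurs in $\mathcal{I}_\phi$. You get (ii) from arc density between coordinates of a triple in $\mathcal{I}_\phi$, and (i) from an out-degree pigeonhole.

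The real divergence is in (iii). The paper shows that the part-level digraph $C(\mathcal{P},\tau)$ is connected directly from the degree condition, by counting arcs across a cut (its quantities $b_1,b_2$). It then pushes arcs of $C(\mathcal{P},\tau)$ between distinct parts down to $C(\mathcal{Z},\beta)$. You avoid $C(\mathcal{P},\tau)$ and the extra constant $\tau$ altogether. By counting transitive triangles through the cut arcs with $d_F(e)\ge\varepsilon n-1$, you show that the co-occurrence graph $\Gamma$ of $\mathcal{I}_\phi$ is already connected. So the single density fact you prove for (ii) also delivers (iii): any two coordinates of a triple in $\mathcal{I}_\phi$ form a dense pair. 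The cost is that you need the triangle-degree of arcs, whereas the paper's cut argument uses only vertex degrees.

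Your version is also tighter at two points the paper glosses over. First, you check the in-neighbour half of Definition~\ref{def-CQ}. The paper's family $\mathcal{W}^+_i$ controls only out-neighbourhoods, and in (ii) it verifies only $|W^+_{(i,j)}|\ge\tau n/2$. Second, in (i) you use $d^+(v)>\varepsilon n$. The paper's bound $((1+\varepsilon)n-2|X|)/2d$ implicitly assumes $d^+(v)\ge\frac12(1+\varepsilon)n$, which need not hold. Its conclusion $d^+(v,V_i')\ge\phi n$ does survive with your bound.
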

\begin{proof}
First, we construct a collection $\mathcal{Z}$, and then prove that it satisfies the above conditions. 
For each $V_i\in \mathcal{P}$, we delete the vertices in $X$ and denote the remaining vertex set by $V_{i}'$, i.e., $V_{i}'=V_{i} \setminus X$ for every $i \in[d]$. Choose a constant $\tau$ with $\sigma \ll \tau \ll \phi$. 
Recall that $t(\phi, i)$ is the number of the index $i$ appears in $\mathcal{I}_{\phi}$ for every $1 \leq i \leq d$. 
We select a set $Z_{i}$ of size $t(\phi, i) m$ from $V_{i}'$ uniformly at random for each $i \in[d]$ and then partition each $Z_{i}$ into $t(\phi, i)$ parts such that each part has a size of $\lfloor\eta n\rfloor$.
For each $I \in \mathcal{I}_{\phi}$ and $j \in[3]$, we uniquely assign $Z^I_j$ to one of the parts of $Z_{f(I,j)}$.
Then, we have a collection $\mathcal{Z}=\cup_{I\in \mathcal{I}_{\phi}, j\in[3]} Z^I_j$.
By Definition \ref{def-absorber}, any such $\mathcal{Z}$ corresponds to an $(\mathcal{P}, \phi, \eta)$-absorbable collection.

Since each vertex set $Z^I_j$ is randomly selected, we will show below that there exists a selection of $\mathcal{Z}$ satisfying the conditions stated in the lemma.
For any $i, j \in [d]$, define  
\[
W^+_{(i, j)}=\{v \in V_{i}': d^+_D(v, V_{j}) \geq \tau n\}.
\]  
For each $i \in [d]$, let $\mathcal{W}^+_i$ be the collection consisting of the sets $N^+_D(v) \cap V_{i}'$ for all $v \in V(D)$ and the sets $W^+_{(i,j)}$ for all $j \in [d]$. Clearly, $|\mathcal{W}^+_{i}|=n+d$, and every $W^+ \in \mathcal{W}^+_{i}$ is a subset of $V_{i}'$.
Consider the deviation event $|W^+\cap Z^I_j| < \mathbb{E}|W^+\cap Z^I_j| - \beta n.$
The random variable $|W^+ \cap Z^I_j|$ is hypergeometrically distributed for any $I \in \mathcal{I}_{\phi}$, $j \in[3]$ and $W^+ \in \mathcal{W}^+_{f(I,j)}$.
(That is, if we have a bin with $|V_{f(I,j)}|$ balls and exactly $|W^+|$ of them are red, then the probability that there are exactly $t$ red balls after drawing $m$ balls without replacement from the bin is $\Pr\bigl(|W^+\cap Z^I_j| = t\bigr)$.)
For each $W^+ \in \mathcal{W}^+_{f(I,j)}$, the expected size of the intersection is
\begin{equation}\label{eq-expected}
	\mathbb{E}|W^+\cap Z^I_j| = \frac{\lfloor\eta n\rfloor}{|V_{f(I,j)}'|}\, |W^+| \ge 0.9\eta |W^+|.
\end{equation}
Clearlay, $|W^+| \ge \beta n$. By Lemma~\ref{thm-Chernoff bound} (Chernoff bound), we have
$$\mathbb{P}(|W^+ \cap Z^I_j|<\mathbb{E}|W^+ \cap Z^I_j|-\beta n) \leq 2 \exp (-\frac{1}{3}\beta^{2} \mathbb{E}|W^+ \cap Z^I_j|) \leq 2 \exp(-\frac{1}{3}\beta^{3} n).$$
Applying the union bound over all $n+d$ sets $W^+$ in $\mathcal{W}^+_{f(I,j)}$, we conclude that with high probability, $|W^+\cap Z^I_j|\geq \mathbb{E}|W^+\cap Z^I_j|-\beta n$ holds for every such $W^+$ simultaneously.
Then, by (\ref{eq-expected}), there exists an $(\mathcal{P}, \phi, \eta)$-absorbable collection $\mathcal{Z}$ such that, for each of the at most $d^{3} \leq1 / \phi^{3}$ elements $I \in \mathcal{I}_{\phi}$ and every $j \in[3]$, $|W^+ \cap Z^I_j|\geq 0.9 \eta |W^+|-\beta n  \text{ for every } W^+ \in \mathcal{W^+}_{f(I,j)}.$
Recall that, for each $v\in V(D)$, $I \in \mathcal{I}_{\phi}$ and $j \in[3]$, $N^+_D(v) \cap V_{f(I,j)}'\subseteq \mathcal{W}^+_{f(I,j)}$. Then, we have
	\begin{equation}\label{eq-d+v XI}
		|N^+_D(v) \cap Z^I_j| \geq 0.9  \eta |N^+_D(v) \cap V_{f(I,j)}'|-\beta n.
	\end{equation}

Recall that $W^+_{(f(I,j), i')} \subseteq \mathcal{W}^+_{f(I,j)}$ for any $i' \in[d]$; hence, for every $I \in \mathcal{I}_{\phi}$ and $j \in[3]$, 
	\begin{equation*}\label{eq-W+ XI}
		|W^+_{(f(I,j),i')}\cap Z^I_j|\geq 0.9 \eta |W^+_{(f(I,j),i')}|-\beta n.
	\end{equation*}
For any pair of $I, I' \in \mathcal{I}_{\phi}$ and $j, j' \in[3]$, we have
	\begin{equation}\label{arc M-N}
		\begin{aligned}
			& \text{if } f(I,j) \neq f(I',j') \text{ and } (V_{f(I,j)}, V_{f(I',j')}) \in A(C(\mathcal{P}, \tau)) , then \\
			& (Z^I_j, Z^{I'}_{j'}) \in A(C(\mathcal{Z}, \beta)).
		\end{aligned}
	\end{equation}

(i).
We claim that $t(\phi, i) \geq1$ for all $i \in[d]$. Otherwise, suppose $j \in[d]$ such that $t(\phi, j)=0$. The number of $T_3$ containing vertices in $V_{j}$ is less than $d^{2} \phi n^{3}$. But there are at least $(\sum_{v \in V_{j}} \frac{1}{3}a(G[N^{\Delta}(v)])) \geq \frac{1}{4}\psi n \cdot \varepsilon n^2$ such $T_3$ in $V_j$, a contradiction.
Thus, for every $i \in[d]$, there exists a $I \in \mathcal{I}_{\phi}$ and $j\in [3]$ such that $Z^I_j \subseteq V_{i}$. 
Moreover, for each vertex $v \in V(D)$, there exists $i \in[d]$ satisfying $d^+_D(v, V_{i}') \geq((1 +\varepsilon) n-2|X|) / 2d \geq\phi n$.
By (\ref{eq-d+v XI}), this implies $d^+_D(v, Z^I_j) \geq\beta n$, and so (i) holds for this $\mathcal{Z}$.

(ii).
Let $I=(i_1,i_2,i_3) \in \mathcal{I}_{\phi}$.
By (\ref{arc M-N}), if $|W^+_{(f(I,j), f(I,j'))}| \geq\tau n / 2$ for any $j,j'\in [3]$ and $j<j'$, then $Z^I_1 Z^I_2 Z^I_3$ forms a $T_3$ in $C(\mathcal{Z}, \beta)$. By $I \in \mathcal{I}_{\phi}$ and (\ref{equ-Imax}), there are at least $\phi n^{2}$ arcs from $V_{f(I,j)}$ to $V_{f(I,j')}$. Then, since $d^+_{D}(v, V_{f(I,j')}) \leq|V_{f(I,j')}| \leq n$ for every $v \in W^+_{(f(I,j), f(I,j'))}$, 
\begin{equation}
	\begin{split}
		|W^+_{(f(I,j), f(I,j'))}| & \geq \frac{a^+_{D}(W^+_{(f(I,j), f(I,j'))}, V_{f(I,j')})}{n} \\
		&=\frac{a^+_{D}(V_{f(I,j)}, V_{f(I,j')})-a(V_{f(I,j)} \setminus W^+_{(f(I,j), f(I,j'))}, V_{f(I,j')})}{n}\\ 
		&\geq \frac{\phi n^{2}-\tau n \cdot(|V_{f(I,j)}|-|W^+_{(f(I,j), f(I,j'))}|)}{n}\geq \frac{\tau n}{2}.
	\end{split}
\end{equation}
The case of $\nu_{I}(i) \geq 2$ could be $f(I,j)=f(I,j')$. So we have $|W^+_{(f(I,j), f(I,j'))}| \geq\tau n / 2$ for any $j,j'\in [3]$ and $j<j'$, i.e., (ii) holds for this $\mathcal{Z}$.

(iii).
When $d=1$, by the above discussion, (\ref{arc M-N}) and the definition of $\mathcal{P}$, $C(\mathcal{Z}, \beta)$ is isomorphic to $T_{3}$, so (iii) holds.
Suppose that $d\ge 2$. Recall that, for any $i \in[d]$, there exists a $I \in \mathcal{I}_{\phi}$ and $j\in [3]$ such that $Z^I_j \subseteq V_{i}$. 
Let $I, I' \in \mathcal{I}_{\phi}$ and $f(I,j)\neq f(I',j')$.
Then, by (\ref{arc M-N}), if there exists a path from $V_{f(I,j)}$ to $V_{f(I',j')}$ in $C(\mathcal{P}, \tau)$, then we can find a path from $Z^I_j$ to $Z^{I'}_{j'}$ in $C(\mathcal{Z}, \beta)$. 
It means that, if digraph $C(\mathcal{P}, \tau)$ is connected, then $C(\mathcal{Z}, \beta)$ is connected. So we only need to show that $C(\mathcal{P}, \tau)$ is connected.

Partition $\mathcal{P}$ arbitrarily into two subfamilies $\mathcal{Q}_1$ and $\mathcal{Q}_2$, and let $Q_i = \bigcup \mathcal{Q}_i$ for $i = [2]$. To prove that $C(\mathcal{P}, \tau)$ is connected, it suffices to show that there is an arc between $\mathcal{Q}_1$ and $\mathcal{Q}_2$.
Without loss of generality, assume $|Q_1| \leq |Q_2|$; then $|Q_1| \leq n/2$.
Let $V_{1} \in \mathcal{Q}_{1}$. For every $v \in V_{1}$, since $\delta(D) \geq (1+\varepsilon)n$ and $|Q_1| \leq n/2$, we have $|N_D(v) \cap Q_2| \geq\frac{1}{2}(\delta(D)-2|Q_1|)\ge  \frac{1}{2}\varepsilon n$. 
Summing over all $v \in V_1$ gives $a^+_{D}(V_{1}, Q_{2})+a^-_{D}(V_{1}, Q_{2}) \geq\frac{1}{2}\varepsilon n|V_{1}|$.
By the pigeonhole principle, there exists a $V_2 \in \mathcal{Q}_2$ such that $\max\{a^+_{D}(V_{1}, V_{2}),a^-_{D}(V_{1}, V_{2})\} \geq\frac{1}{4d}\varepsilon n |V_{1}|$.
We may assume without loss of generality that $a^+_{D}(V_{1}, V_{2})\ge \frac{1}{4d}\varepsilon n |V_{1}|$; consequently, $a^-_{D}(V_{2}, V_{1})\ge \frac{1}{4d}\varepsilon n |V_{1}|$ as well.
Now define two quantities:
\begin{itemize}
	\item $b_1$: the number of vertices $v \in V_1$ with $|N^+_D(v)\cap V_2|\ge \tau n$;
	\item $b_2$: the number of vertices $u \in V_2$ with $|N^-_D(u)\cap V_1|\ge \tau n$.
\end{itemize}
From the definition of $b_1$ and the bound on the total arcs from $V_1$ to $V_2$, we obtain
$$b_1|V_{2}|+\tau n (|V_{1}|-b_1)  \geq\frac{1}{4d}\varepsilon n |V_{1}|,$$
and similarly,
$$b_2|V_{1}|+\tau n (|V_{2}|-b_2)  \geq\frac{1}{4d}\varepsilon n |V_{1}|.$$
Recall that $\psi n \leq|V_{1}|, |V_{2}| \leq n$ and $\psi \varepsilon / d \geq\psi^{2} \varepsilon \geq2 \tau$. These inequalities show that
\begin{equation*}
	b_1\geq \frac{\varepsilon n |V_{1}|/ 4d-\tau n |V_{1}|}{|V_{2}|-\tau n} \geq \frac{(\psi \varepsilon / 4d-\tau) n^{2}}{n} \geq \tau n,
\end{equation*}
and analogously, 
\begin{equation*}
	b_2 \geq \frac{\varepsilon n|V_{1}| / 4d-\tau n |V_{2}|  }{|V_{1}|-\tau n} \geq \frac{(\psi \varepsilon / 4d-\tau) n^{2}}{n} \geq \tau n.
\end{equation*}
Thus, there exists an arc $(V_1,V_2)\in A(C(\mathcal{P}, \tau))$. Hence, $C(\mathcal{P}, \tau)$ is connected, and consequently $C(\mathcal{Z}, \beta)$ is connected as well. So (iii) holds for this $\mathcal{Z}$.
\end{proof}
\begin{proof}[\textbf{Proof of Lemma \ref{lem-absorb}}]
	Given constants $\beta, \eta, \phi$ and $\psi$ such that $0<\gamma \ll \zeta \ll \beta \ll \eta \ll \sigma \ll \phi \ll \psi \ll \varepsilon <1/8$. Let $\sigma'=\frac{1}{2}\sigma$.
	First, we apply Lemma~\ref{lem-Pexist} to obtain a $(\psi, \phi, 6)$-linked partition $\mathcal{P}$ of $V(D)$. Using this partition, Lemma~\ref{lem-P to X} then yields a set $X\subseteq V(D)$ with $|X| \leq\sigma' n$ that is an $(\mathcal{P}, \phi, \eta)$-absorber. Finally, by Lemma~\ref{lem-absorbable collection Z}, we select a collection $\mathcal{Z}$ of disjoint subsets of $V(D)\setminus X$ that is $(\mathcal{P},\phi,\eta)$-absorbable with respect to $X$ and satisfies properties (i), (ii), and (iii) of Lemma~\ref{lem-absorbable collection Z}.
	Let $Z=V(\mathcal{Z})=\cup \{Z^I_j: I \in F_{\phi}, j \in[3]\}$ and $U=X \cup Z$.
	Note that  $|U| \leq\sigma' n+3 d^{3} \eta n \leq \sigma n$.
	
	For each $W \subseteq V(D) \setminus U$ which satisfies $|W| \leq\zeta n$ and $|W|$ is divisible by $3$, we achieve a perfect $T_3$-tiling of $G[W \cup U]$ through the following three steps.\smallskip\\
	\textbf{Step 1.} Use $\mathcal{Z}$ to absorb $W$.
	
	By Lemma \ref{lem-absorbable collection Z} (i), for any vertex $w\in W$, there exists $I \in F_{\phi}$ and $j \in[3]$, such that $d^+_D(w, Z^I_j) \geq \beta n>\gamma n+2|W|$. 
	Since $\alpha(D)\le\gamma n$, for each vertex $w\in W$ we can select an arc $e_{w} \in A(D[N^+_D(w) \cap Z^I_j])$ such that $we_{w}$ forms a $T_3$ in $D$, and moreover, the arcs $e_w$ are distinct for distinct $w\in W$. Consequently, the family $\mathcal{T}_W=\{we_{w}: w \in W, e_{w}\in A(D[N^+_D(w) \cap Z^I_j])\}$ consists of vertex-disjoint copies of $T_3$.\smallskip\\
	\textbf{Step 2.} Adjust the remaining vertices in $\mathcal{Z}$ to form a $T_3$-tiling and a new $(\mathcal{P}, \phi, \eta)$-absorbable collection with respect to $X$.
	
	In this step, we iteratively construct a set $\mathcal{T}_0$ of vertex-disjoint copies of $T_3$ in $D[Z \setminus V(\mathcal{T}_W)]$, such that the remaining vertices form a new $(\mathcal{P}, \phi, \eta)$-absorbable collection with respect to $X$.
	For convenience, we will use $\mathcal{T}_0$ to represent the $T_3$ that have been constructed so far in this iterative process. 
	At the beginning of this process, let $\mathcal{T}_0=\emptyset$.
	For each $I \in F_{\phi}$ and $j \in[3]$, define
	\[
	Z^{I*}_{j}=Z^I_j \setminus V(\mathcal{T}_W \cup \mathcal{T}_0),\qquad
	\mathcal{Z}'= \{Z^{I*}_{j} : I \in F_{\phi}, j \in[3]\},\qquad
	Z'=V(\mathcal{Z}').
	\]
	We now show that for every $I\in F_{\phi}$, the three sets $Z^{I*}_1$, $Z^{I*}_2$, and $Z^{I*}_3$ have the same cardinality. Consequently, $\mathcal{Z}'$ forms a new $(\mathcal{P}, \phi, \eta)$-absorbable collection with respect to $X$. The construction of $\mathcal{T}_0$ involves a two-stage process.

	In the first stage, we show that
	\begin{equation}\label{conc-stage1}
		|Z^{I*}_{1} \cup Z^{I*}_{2} \cup Z^{I*}_{3}| \equiv 0 \quad(\bmod\ 3) \text{ for any } I \in F_{\phi}.
	\end{equation}
	A vector $I\in F_{\phi}$ is called \emph{bad} if it does not satisfy (\ref{conc-stage1}) in any step of the first stage of the algorithm.
	Since $|Z'|=|Z|-2|W|-3|\mathcal{T}_0|$ and both $|W|$ and $|Z|$ are divisible by $3$, $|Z'|$ is always divisible by $3$.
	Then, for each bad $I \in F_{\phi}$, if $|Z^{I*}_{1} \cup Z^{I*}_{2} \cup Z^{I*}_{3}| \equiv 1\ (\bmod\ 3)$, there exists another bad vector $I' \in F_{\phi}\setminus \{I\}$.
	By Lemma \ref{lem-absorbable collection Z} (iii), there exists a path $P$ from $Z^I_1$ to $Z(I', 1)$ in the digraph $C(\mathcal{Z}, \beta)$.
	Then, by Lemma \ref{pro-Lconnect}, we can find a $T_3$-tiling corresponding to path $P$ in $D$, which contains one vertex in $|Z^{I*}_{1} \cup Z^{I*}_{2} \cup Z^{I*}_{3}|$ and two vertices in $|Z'(I', 1) \cup Z'(I', 2) \cup Z'(I', 3)|$ and add it to $\mathcal{T}_0$ does not increase the number of bad $I$'s. Then we add it to $\mathcal{T}_0$ and at least one of $I$ and $I'$ is no longer bad after this step.
	Note that, for each $I\in  F_{\phi}$ which was not bad before this step, it remains good after this step. 
	We can finish this process within at most $|Z|$ steps.
	So $|\mathcal{T}_0| \leq|Z|(|Z|-1) \leq(3 \cdot d^{3})^{2}$ after the first stage.
	
	The next goal is to achieve $|Z^{I*}_1| = |Z^{I*}_2| = |Z^{I*}_3|$ for every $I\in F_\phi$. Fix an $I\in F_\phi$ for which this equality does not hold; such an $I$ satisfies condition~(\ref{conc-stage1}). By Lemma~\ref{lem-absorbable collection Z}(ii), the triple $Z^I_1 Z^I_2 Z^I_3$ forms a $T_3$ in $C(\mathcal{Z}, \beta)$. 
	Let $j, j', j''$ be pairwise distinct indices in $[3]$ satisfying $|Z^{I*}_j| \ge |Z^{I*}_{j'}| \ge |Z^{I*}_{j''}|$. Then, by the construction of $C(\mathcal{Z}, \beta)$, there exists a vertex $v \in Z^{I*}_j$ such that either $d^+_D(v, Z^{I*}_{j'})$ or $d^-_D(v, Z^{I*}_{j'})$ exceeds $\gamma n = \alpha(D)$. Hence, we can find a copy of $T_3$ with one vertex in $Z^{I*}_j$ and two vertices in $Z^{I*}_{j'}$, and we add this $T_3$ to $\mathcal{T}_0$.
	
	We now show that this process can be repeated until $|Z^{I*}_1| = |Z^{I*}_2| = |Z^{I*}_3|$ holds for every $I\in F_\phi$.
	Let $c_I=(|Z^{I*}_{j}|-|Z^{I*}_{j''}|)+(|Z^{I*}_{j'}|-|Z^{I*}_{j''}|) $.
	Fixed $c'_I=c_I$ before any $T_3$ being picked (Note that $c'_I$ and $c_I$ are divisible by $3$ after each steps).
	Note that $|\mathcal{T}_0| \leq9 d^{6}$ at the start of the second stage of the algorithm, $|\mathcal{T}_W|=|W|$. 
	Then, since every $T_3\in \mathcal{T}_0 \cup \mathcal{T}_W$ has at most $2$ vertices in $Z^I_j$ for any $j \in[3]$, we have that $c'_I \leq 4(9  d^{6}+|W|)<2 \zeta n$. Therefore, $V(\mathcal{T}_0 \cup \mathcal{T}_W)$ intersects any set in $\mathcal{Z}$ at most $2(\frac{2}{3} c'_I +9 d^{6}+|W|)<(\beta-\gamma) n$ vertices.
	$c_I$ decreases by $3$ after each $T_3$ is added to $\mathcal{T}_0$ when $|Z^{I*}_{j}|\neq |Z^{I*}_{j'}|$. When $|Z^{I*}_{j}|=|Z^{I*}_{j'}|$, $c_I$ does not change, but $c_I$ will decrease by $3$ when the following $T_3$ is added to $\mathcal{T}_0$.
	Then this process terminates after constructing at most $\frac{2}{3} c'_I$ copies of $T_3$. Consequently, we have
	\begin{equation*}
		|V(\mathcal{T}_0 \cup \mathcal{T}_Y) \cap Z^I_j|,|V(\mathcal{T}_0 \cup \mathcal{T}_Y) \cap Z^I_{j'}|<(\beta-\gamma) n,
	\end{equation*}
	which means that, for any $j, j'\in [3]$ with $j\neq j'$, there exists $v \in Z^{I*}_{j}$ such that $d^+_D(v, Z^{I*}_{j'})$ or $d^-_D(v, Z^{I*}_{j'})>\gamma n=\alpha(D)$ after any process.
	We repeatedly select a copy of $T_3$ and add it to $\mathcal{T}_0$ until $|Z^{I*}_{1}|=|Z^{I*}_{2}|=|Z^{I*}_{3}|$.
	Now, by Lemma~\ref{lem-absorbable collection Z}, we have $|Z^{I*}_1| = |Z^{I*}_2| = |Z^{I*}_3| \le \eta n$. So the collection $\mathcal{Z}'$ is $(\mathcal{P}, \phi, \eta)$-absorbable with respect to $X$.\smallskip\\
	\textbf{Step 3.} Use $X$ to absorb $\mathcal{Z}'$.
	
	Recalling that $X$ is an $(\mathcal{P}, \phi, \eta)$-absorber, by the definition of $X$, there exists a perfect $T_3$-tiling of $D[X \cup Z']$. This $T_3$-tiling together with $\mathcal{T}_W\cup\mathcal{T}_0$ is a perfect $T_3$-tiling of $D[W \cup X \cup Z]=G[W \cup U]$, which completes the proof.
\end{proof}
\section{Almost-covering Lemma and the Proof of Theorem~\ref{thm-T3}}
In this section, we prove Almost-Covering Lemma and apply it to establish the main theorem of this paper. First of all, we give an obvious proposition, which is used to prove the Almost-covering Lemma.
\begin{proposition}\label{pro-4arcT3}
	Let $H$ be a semi-complete digraph with $V(H)=\{v_1,v_2,v_3\}$. If $|A(H)|\ge 4$, then $H$ has a copy of $T_3$ as a subgraph. 
\end{proposition}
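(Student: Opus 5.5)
The plan is a direct case analysis on the arcs of $H$. Since $H$ has only three vertices, there are three unordered pairs, and each pair carries either one arc or both arcs (a $2$-cycle). This is because $H$ is semi-complete, so every pair is joined by at least one arc. Hence $|A(H)|\ge 4$ forces at least one pair to be joined in both directions. The other two pairs each carry at least one arc.

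Relabel so that $v_1$ and $v_2$ are joined by both arcs $(v_1,v_2)$ and $(v_2,v_1)$. Next, consider the arcs between $v_3$ and the other two vertices; for each of $v_1$ and $v_2$ there is at least one arc in some direction. We want to choose three arcs, one per pair, with no directed $3$-cycle among them, since any orientation of a triangle that is not a directed $3$-cycle is transitive. The approach is to fix one arc between $v_3$ and $v_1$ and one arc between $v_3$ and $v_2$. These two arcs determine $v_3$ as a source, a sink, or a middle vertex.

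\begin{itemize}
\item If $v_3$ is a source or a sink with respect to them, either arc between $v_1$ and $v_2$ completes a $T_3$.
\item If, say, $(v_1,v_3),(v_3,v_2)\in A(H)$, then choosing the arc $(v_1,v_2)$ gives the transitive triangle with source $v_1$ and sink $v_2$.
\item The symmetric case $(v_2,v_3),(v_3,v_1)\in A(H)$ is handled by choosing $(v_2,v_1)$.
\end{itemize}

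In all cases, $H$ contains three arcs forming a $T_3$ on $\{v_1,v_2,v_3\}$.

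There is no real obstacle here. The only point needing care is justifying that some pair carries both arcs, which follows from counting: three pairs with at least $4$ arcs in total, and each pair carrying at most two arcs. Note also that "contains a copy of $T_3$ as a subgraph" means a spanning, not necessarily induced, subgraph, which is how the proposition is applied later.
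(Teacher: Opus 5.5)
Your proof is correct. Counting forces a $2$-cycle on some pair. Then, for any fixed arcs at $v_3$, at most one of the two arcs on that pair closes a directed $3$-cycle, so the other one gives a $T_3$; your three cases check exactly this. The paper gives no proof (it calls the proposition obvious), so your case analysis is the routine verification it omits, and your reading of ``subgraph'' as non-induced is the one its later applications need.
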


\begin{lemma}[Almost-covering Lemma]\label{lem-cover}
	Given constant $\varepsilon>0$, there exist $\gamma>0$ and $n_0$ such that the following holds. Let $D$ be an $n$-vertex digraph with $n>n_0$. If $\delta(D) \geq (1+\varepsilon)n$ and $\alpha(D) \leq \gamma n$, then there is a perfect $T_3$-tiling cover all but at most $12/\varepsilon+1$ vertices.
\end{lemma}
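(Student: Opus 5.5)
We take a maximum $T_3$-tiling $\mathcal{T}$ of $D$ and argue by contradiction: suppose the set $R=V(D)\setminus V(\mathcal{T})$ of uncovered vertices satisfies $|R|>12/\varepsilon+1$. This is the Balogh--Molla--Sharifzadeh strategy: derive from maximality strong restrictions on how $R$ can send arcs into the tiles, and then contradict the degree condition.

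\textbf{Step 1: $D[R]$ is $T_3$-free.} By maximality $D[R]$ contains no $T_3$. Since a vertex $v\in R$ together with any arc inside $N^+(v)\cap R$ (or inside $N^-(v)\cap R$) spans a $T_3$, both $N^+(v)\cap R$ and $N^-(v)\cap R$ are independent. Hence $d^+(v,R),d^-(v,R)\le\gamma n$, so $d(v,R)\le 2\gamma n$. Therefore each $v\in R$ sends at least $(1+\varepsilon)n-2\gamma n$ arcs into $V(\mathcal{T})$, counted with multiplicity: a $2$-cycle contributes $2$, and a single tile $T$ can receive at most $6$ arcs from $v$.

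\textbf{Step 2: a counting argument on tiles.} Fix a set $S\subseteq R$ of $s$ vertices, where $s\approx 12/\varepsilon$, chosen to avoid the small number of exceptional configurations. Summing the degrees, $S$ sends at least $s(1+\varepsilon-2\gamma)n$ arcs to the $|\mathcal{T}|\le n/3$ tiles. The average tile therefore receives at least $3s(1+\varepsilon-2\gamma)$ arcs from $S$. So there are at least $\Omega(\varepsilon n)$ ``rich'' tiles $T$ receiving more than $3s+\Theta(\varepsilon s)$ arcs from $S$. The choice $s\ge 12/\varepsilon$ makes this surplus at least a constant number of arcs (roughly $3$ or more). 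The core local claim is then: if a tile $T=\{t_1,t_2,t_3\}$ receives enough arcs from $S$, we can replace $T$ by two vertex-disjoint $T_3$'s using $T$ and two vertices of $S$, or better. This uses Proposition~\ref{pro-4arcT3}: a vertex $v$ with at least $5$ arcs to $T$ is adjacent to all of $T$, so it forms a $T_3$ with suitable pairs of $T$. The case analysis is on how arcs are distributed. For example, $x,y\in S$ each with at least $4$ arcs to $T$, with both having large degree to different vertices, let us form $\{x,t_a,t_b\}$ and $\{y,t_c,\cdot\}$. Any such swap increases $|\mathcal{T}|$, contradicting maximality.

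\textbf{Step 3: handling local configurations that do not close up.} A single tile can fail to yield two triangles, for instance when $y$ is only adjacent to one vertex of $T$. In that case I would use $\alpha(D)\le\gamma n$ globally. Among the $\Omega(\varepsilon n)\gg\gamma n$ rich tiles, the ``pivot'' vertices $t\in T$ with many in- or out-neighbours in $S$ form a set larger than $\gamma n$. Hence this set spans arcs, so two rich tiles $T,T'$ can be combined with three vertices of $S$ into three $T_3$'s, replacing two tiles by three. Concretely: find an arc $t t'$ with $t\in T$, $t'\in T'$, and $x\in S$ adjacent to both with orientations forming a $T_3$. Then re-tile $(T\setminus t)\cup\{y\}$ and $(T'\setminus t')\cup\{z\}$ using the remaining high-degree vertices.

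The main obstacle is the case analysis in Step 2/3: ensuring the arc-orientation constraints of $T_3$ (not merely adjacency) are satisfied when rerouting. A vertex adjacent to two tile vertices only forms a $T_3$ if it is not the middle of a directed path in the wrong way. I would try first to reduce this to Proposition~\ref{pro-4arcT3} by always working with triples having at least $4$ arcs, i.e.\ tile edges plus $2$-cycle-rich attachments. The constant $12/\varepsilon+1$ should come out of requiring the surplus $\varepsilon s\ge 12$.
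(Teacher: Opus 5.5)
There is a genuine gap: the exchanges you rely on in Steps 2--3 need not exist.

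\textbf{Step 2.} Your core local claim is impossible as stated. $T$ together with two vertices of $S$ has only five vertices, so it cannot contain two disjoint $T_3$'s; your own example $\{y,t_c,\cdot\}$ is missing a vertex. A gain from a single tile needs three uncovered vertices. Because $D[R]$ is $T_3$-free, one of the two new triangles must then contain two uncovered vertices joined by an arc. Nothing in your argument produces arcs inside $S$, and a set of bounded size may well be independent.

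\textbf{Step 3.} The swap needs a fixed $x\in S$ and more than $\gamma n$ pivots lying all in $N^+(x)$ (or all in $N^-(x)$). Each such tile minus its pivot must also form a $T_3$ with two further vertices of $S$. Richness does not give this. Consider the configuration where $R$ is independent and every $v\in R$ forms $2$-cycles with two vertices $t_a,t_b$ of every tile but has no arc to the third vertex $t_c$.
\begin{itemize}
\item Then $d(v)\approx 4n/3$ and every tile receives $4|S|$ arcs from $S$, so all tiles are rich.
\item The only pair of a tile that forms a $T_3$ with a vertex of $S$ is $\{t_a,t_b\}$, and the corresponding pivot $t_c$ has no neighbour in $S$.
\item Every new triangle through a vertex of $S$ uses two of the $t_a,t_b$'s. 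So no re-tiling of one or two tiles together with vertices of $S$ enlarges $\mathcal{T}$.
\end{itemize}
What is available is a size-preserving swap. The $t_c$'s number more than $\gamma n$, so some $t_ct_c'$ is an arc. Replacing $T,T'$ by $\{v_1,t_a,t_b\}$ and $\{v_2,t_a',t_b'\}$ leaves the arc $t_ct_c'$ uncovered. The single potential $|\mathcal{T}|$ cannot register this progress.

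\textbf{The missing idea.} The paper adds a second potential. Choose $(\mathcal{T},M)$ lexicographically maximal: $\mathcal{T}$ a maximum tiling and, subject to this, $M$ a largest matching in $D-V(\mathcal{T})$. Let $V_0$ be the unmatched, hence independent, uncovered vertices.
\begin{itemize}
\item (I) If $m=|M|\ge 6/\varepsilon$, averaging gives $\Omega(\varepsilon n)$ tiles $T$ with a vertex $y_T$ having at least $m+3$ vertices of $V(M)$ in $N^+(y_T)$ or in $N^-(y_T)$. Hence three edges of $M$ form a $T_3$ with $y_T$. Some $x\in V(M)$ has more than $\gamma n$ out- (or in-)neighbours among the remaining vertices of these tiles, so it forms a $T_3$ with an arc $e_x$ there. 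Replacing the one or two tiles met by $e_x$ with $xe_x$ and the triangles $y_T$ plus an $M$-edge enlarges $\mathcal{T}$.
\item (II) Two vertices of $V_0$ give $\Omega(\varepsilon n)$ tiles receiving at least $7$ arcs from them. Each such tile either contains a $T_3$ plus a disjoint arc, which enlarges $M$, or has exactly the structure above. In the latter case the swap via an arc $yy'$ between pivots again enlarges $M$.
\end{itemize}
Thus the number of uncovered vertices is $2m+|V_0|<12/\varepsilon+1$. The constant comes from (I) and (II), not from a surplus $\varepsilon s\ge 12$.
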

\begin{proof}
Given constant $\gamma<\varepsilon / 72$, since $\delta(D) \geq (1+\varepsilon)n$, there is $|N^{\Delta}(v)|=d^{\Delta}(v) \ge \frac{1}{2}\delta(D)$ for each vertex $v\in V(D)$. Then, by $\alpha(D) \leq \gamma n$, there exists an arc $e\in D[N^{\Delta}(v)]$. Note that $v$ and $e$ form a $T_3$ in $D$, which indicates the existence of $T_3$-tiling in $D$.
	Let $\mathcal{T}$ be a maximal family of disjoint $T_3$ in $D$, and let $M$ be a maximal matching in $D[V(D) \setminus V(\mathcal{T})]$. Denote $V_0=V(D-\mathcal{T}-M)$. Set $t = |\mathcal{T}|$ and $m = |M|$. Then clearly $n=3 t+2 m+|V_0|$, and $|V_0| \leq \alpha(D) \leq \gamma n$.
	Note that, for each vertex $v\in V(D)$, if there exists an arc in $D[N^+(v)]$ or $D[N^-(v)]$, we can find a copy of $T_3$ in $D$ contain $v$. So $t\geq\frac{\delta(D)/2-\alpha(D)}{3} \geq \frac{n}{6}$.
	We only need to prove the following:
	\begin{equation*}
		\begin{aligned}
		&\text{(I)} m < 6 / \varepsilon;\\
		&\text{(II)} V_0 \le 1.
		\end{aligned}
	\end{equation*}

(I). Suppose $\varepsilon m \geq 6$.
	By the maximality of $\mathcal{T}$, every vertex $v\in V(M)$ satisfies 
	$d_D(v, V(M)\cup V_0) \le 2m+2|V_0|$. 
	Combining this with $|V_0| \leq\gamma n$, $t\ge \frac{n}{6}$, and $\gamma<\varepsilon / 72$,
	\begin{equation*}
	\begin{aligned}
		d(v, V(\mathcal{T})) & \geq(1+\varepsilon) n-2|V_0|-2m=(1+\varepsilon)(3 t+2 m+|V_0|)-2|V_0|-2m \\
		& \geq 3(1+\varepsilon)t-|V_0| \ge (3+2\varepsilon)t.
	\end{aligned}
	\end{equation*}
	Then 
	\begin{equation}\label{eq-arcMT}
	a_{D}(V(M),V(\mathcal{T}))=\sum_{v\in V(M)} d_D(v, V(\mathcal{T}))\ge (6+4\varepsilon)tm.
	\end{equation}
	This implies that there exists a $\mathcal{T}'\subset \mathcal{T}$ such that $a_{D}(T,V(M))\ge \frac{1}{t}a_{D}(V(M),V(\mathcal{T}))\ge 6m+18$ for each $T\in \mathcal{T}'$. 
	So there exists a vertex $y_T \in V(T)$ such that $a_{D}(T,V(M))\ge 2m+6$, and then $|N^{\Delta}(y_T)\cap V(M)|\ge m+3$. Let $Y=\{y_{T}: T \in \mathcal{T}'\}$, $t'=|\mathcal{T}'|$ and $R=V(\mathcal{T}') \setminus Y$.
	Since each $T \in \mathcal{T}$ can send at most $12m$ arcs to $M$, 
	\begin{equation*}
		a_{D}(V(M),V(\mathcal{T})) \leq 12 mt'  +(t-t')(6 m+17)=(6 m+17) t+(6 m-17) t'.
	\end{equation*}
By $\varepsilon m \geq 6$ and (\ref{eq-arcMT}), we have
\begin{equation}\label{eq=t'}
	t' \geq \frac{4\varepsilon m-17}{6 m-17} \cdot t \geq \frac{\varepsilon m}{6 m-17} \cdot t \geq \frac{\varepsilon t}{6}  \geq \frac{\varepsilon n}{36}.   
\end{equation}
	By $a_{D}(V(M), V(\mathcal{T}')) \geq(6 m+18) t'$, there exists $x \in V(M)$ such that 
	\begin{equation*}
		\max\{d^+_D(x, V(\mathcal{T}')), d^-_D(x, V(\mathcal{T}'))\}\ge \frac{a(V(M), V(\mathcal{T}'))}{4 m} \geq \frac{(6 m+18) t'}{4 m} \geq \frac{3t'}{2}.
	\end{equation*}
	Without loss of generality, assume $d^+_D(x, V(\mathcal{T}'))\ge 3t'/2$.
	Recall that for each $T \in \mathcal{T}'$, we have $|V(T) \cap Y| = 1$. Consequently, $d^+_D(x, R) \geq d^+(x, V(\mathcal{T}'))-|Y| \geq t'/2 \geq (\varepsilon n) / 72>\gamma n$.
	By $\alpha(D) \leq\gamma n$, there exists an arc $e_x$ in $D[N^+_D(x)\cap R]$, and $e_x$ and $x$ forms a $T_3$, denoted it as $T_x$.
	If there exists a $T_0\in \mathcal{T}'$ such that $|V(T_0)\cap V(T_x)|=2$, then by the definition of $y_T$, $y_T\notin V(T_x)$, we can find a $T_3$ containing $y_T$ and an arc in $M$, denoted it as $T_y$. Clearly, $\mathcal{T}\cup \{T_x, T_y\}\setminus T$ is a $T_3$-tiling of $D$ and it has more elements than $\mathcal{T}$, a contradiction.
	Thus $T_x$ uses vertices from two distinct elements $T_1, T_2 \in \mathcal{T}'$, and for each $i \in \{1,2\}$, the vertex $y_{T_i}$ has three arcs in $M$ that can be combined with it to form a $T_3$. Therefore, we greedily select two arcs in $M$ that respectively form $T_3$ with $y_{T_1}$ and $y_{T_2}$, and denote them as $T'_1, T'_2$. Then $\mathcal{T}\cup \{T_x, T'_1, T'_2\}\setminus \{T_1, T_2\}$ is a $T_3$-tiling of $D$, and its cardinality is greater than that of $\mathcal{T}$, a contradiction.
	It means that $m<6 / \varepsilon$.

(II). Suppose there exist two distinct vertices $v_1, v_2 \in V_0$. For each $i\in \{1,2\}$, we have $a_{D}(v_i ,V(\mathcal {M}))\le 2m$. By the maximality of $\mathcal {M}$, $V_0$ is an independent set. Then we have
\begin{equation*}
	a_{D}(\{v_1, v_2\} ,V(\mathcal{T}))\geq 2(\delta (D)-2m)\geq 2 \left( (1+\varepsilon)n - \frac{12}{\varepsilon}\right) \geq (2+\varepsilon)n>6t+\varepsilon n.
\end{equation*}
So there exists a $\mathcal{T}''\subset \mathcal{T}$ such that $a_{D}(\{v_1, v_2\},V(T))\ge 7$ for each $T\in \mathcal{T}''$.
Let $t''=|\mathcal{T}''|$. By the pigeonhole principle and $\gamma<\varepsilon / 72$, 
\begin{equation}\label{eq-t''}
	t''\ge \varepsilon n/3>\gamma n.
\end{equation}
For each $T\in \mathcal{T}''$, without loss of generality, we assume $d_D(v_1,V(T))\ge d_D(v_2,V(T))$.
		
If $d_D(v_1,V(T))\ge 5$, then, by $d_D(v_1,V(T))\le 6$, it follows that $d_D(v_2,V(T))\ge 1$. Let $V(T)=\{x,y,z\}$ with the property that $z$ is adjacent to $v_2$. Consequently, $d_D(v_1,\{x,y\})\ge d_D(v_1,T)-2\ge 3$, and by Proposition~\ref{pro-4arcT3}, $xy v_1$ forms a $T_3$. Then we have a $T_3$ and an arc in $D[\{v_1, v_2\}\cup V(T)]$, a contradiction.
So $d_D(v_1,V(T))= 4$ and $d_D(v_2,V(T))\ge 3$. Without loss of generality, we assume $d_D(v_1,x)= 2$. By $d_D(v_2,V(T))\ge 3$, there exist a vertex in $V(T)\setminus\{x\}$ adjacent to $v_2$.
Suppose $z$ is adjacent to $v_2$. 
If $y$ is adjacent to $v_1$, then $xyv_1$ forms a $T_3$, and together with an arc in $D[\{v_1, v_2\}\cup V(T)]$ this yields a contradiction. Hence $y$ is not adjacent to $v_1$, and consequently $d_D(v_1,z)=d_D(v_1,V(T))-d_D(v_1,\{x,y\})= 2$.
If $y$ is adjacent to $v_2$, then $xzv_1$ forms a $T_3$ while $yv_2$ is an arc, again a contradiction. Therefore, by Proposition~\ref{pro-4arcT3}, we have $d_D(y, \{v_1, v_2\}) = 0$, and both $xzv_1$ and $xzv_2$ form a $T_3$.
(The case where $y$ is adjacent to $v_2$ is symmetric.)

Then, for each $T\in \mathcal{T}''$, denote $V(T)=\{x,y,z\}$, we can get the following conclusion: There exist a vertex $y\in V(T)$ such that $d_D(y,\{v_1, v_2\})=0$; both $xzv_1$ and $xzv_2$ form a $T_3$, where $\{x,z\}=V(T)\setminus \{y\}$.
let $Y'$ be the set of such vertex $y\in V(T)$ for any $T \in \mathcal{T}''$. By (\ref{eq-t''}), $|Y'|=t'' > \gamma n$.
By $\alpha(D) \leq \gamma n$, there exists an arc $yy' \in A(Y')$. Denote $T$ (respectively, $T'$) be the $T_3$ in $\mathcal{T}''$ containing $y$ (respectively, $y'$), and $\{x,z\}=V(T)\setminus \{y\}$  (respectively, $\{x',z'\}=V(T')\setminus \{y'\}$). Then $xzv_1$ and $x'z'v_2$ form two $T_3$ and $yy' \in A(D)$, contradicting the maximality of $\mathcal{T}$. 
So $|V_0|\le 1$.
	
By (I) and (II), the number of vertices not covered by $\mathcal{T}$ is less than $2m + |V_0|< 2 \cdot 6/\varepsilon + 1 = 12/\varepsilon + 1$, which completes the proof. 
\end{proof}
We now use Lemmas \ref{lem-absorb} and \ref{lem-cover} to prove Theorem \ref{thm-T3}. 
\begin{proof}[\textbf{Proof of Theorem \ref{thm-T3}}]
	Given constants 
	$$0<\gamma \ll \zeta \ll \sigma \ll \varepsilon<1/6$$
	be as in Lemma \ref{lem-absorb}, among them, $\gamma$ satisfies that, when $\varepsilon$ and $\gamma$ are replaced with $\varepsilon'=\varepsilon-2\sigma$ and $\gamma'=\gamma/(1-2\sigma)$, respectively, the conditions of Lemma \ref{lem-cover}. 
	Then, by Lemma \ref{lem-absorb}, we can get a vertex set $U \subseteq V(D)$ such that $|U|\le\sigma n$ and $U$ satisfies the results of Lemma \ref{lem-absorb}. 
	Let $V'=V(D)\setminus U$, $n'=|V'|$ and $D'=D[V']$.
	Since  $\delta(D) \geq (1+\varepsilon)n$, $\alpha(D) \leq \gamma n$ and $|U|\le\sigma n$, there are $\delta(D') \geq (1+\varepsilon')n'$ and $\alpha(D') \leq \gamma n \leq \gamma' n'$.
	By Lemma \ref{lem-cover}, we have a $T_3$-tiling $\mathcal{T}_1$ such that $|V'\setminus V(\mathcal{T}_1)| \leq 12/\varepsilon'+1$. Recalling that $n$, $|U|$ and $|V(\mathcal{T}_1)|$ are divisible by 3,  $|V'\setminus V(\mathcal{T}_1)|$ is divisible by 3.
	Then, by Lemma \ref{lem-absorb}, there exists a perfect $T_3$-tiling $\mathcal{T}_2$ of $D[(V'\setminus V(\mathcal{T}_1)) \cup U]$. So $\mathcal{T}_1 \cup \mathcal{T}_2$ is a perfect $T_3$-tiling of $D$.
\end{proof}
\section{Remarks}
	As we mentioned earlier, Knierim and Su \cite{knier21} resolved Question \ref{q1.1} for all $K_r$. For digraphs, Czygrinow, Debiasio, Molla and Treglown \cite{CzygrinowDMT} showed that, for a digraph $D$ with $n \geq n_0$ vertices where $r$ divides $n$, if $\delta(D)\geq 2(1-1/r)n-1,$ then $D$ contains a perfect $\mathbb{T}_r$-tiling for every tournament $\mathbb{T}_r$ on $r$ vertices, where $r\ge 4$. When $r=3$, a more stringent degree condition is required for the above conclusion to hold, see \cite{wang}.
	Therefore, it is natural to ask the following question. 
	\begin{question}\label{q1.2}
		Let $D$ be an $n$-vertex digraph with $\alpha(D)=o(1)n$. What is the minimum degree condition on $D$ that guarantees a perfect $\mathbb{T}_r$-tiling in $D$ for $r \geq 3$?
	\end{question}
	In this paper, we present results addressing this question in the context of $T_3$. For another type of question concerning $C_3$-tilings with $r=3$, imposing condition of the independence number does not lead to an improved lower bound for the degree condition, see Wang \cite{wang}.
	
	There also exist some generalizations of independence number in digraphs.
	For a digraph $D$, a set $U \subseteq V(D)$ is called \emph{$2$-cycle independent} if  there is no directed $2$-cycle in $D[U]$. The \emph{$2$-cycle independence number} $\alpha_2(D)$ is the maximum cardinality of a $2$-cycle independent set. Clearly, $\alpha_2(D)\ge \alpha(D)$.
	A natural question arises: if we replace $\alpha(D)$ with $\alpha_2(D)$ in the Question~\ref{q1.2}, can this further relax the requirement of the degree condition for $r \geq 4$?
	
	In digraphs, the condition of the minimum semi-degree threshold is also worth considering. 
	In 2015, Treglown \cite{Treglown} showed that, for a digraph $D$ with $n \geq n_0$ vertices where $r$ divides $n$, if $\delta^0(D)\geq (1-1/r)n,$ then $D$ contains a perfect $\mathbb{T}_r$-tiling for every tournament $\mathbb{T}_r$ on $r$ vertices, where $r\ge 3$.  
	Often, a semi-degree condition will yield a better bound, so the following question can be considered:
	\begin{question}\label{que-end2}
		Let $D$ be an $n$-vertex digraph with $\alpha(D)=o(1)n$ (or $\alpha_2(D)=o(1)n$). What is the minimum semi-degree $\delta^0(D)$ condition on $D$ that guarantees a perfect $\mathbb{T}_r$-tiling in $D$ for $r \geq 3$?
	\end{question}
	Recently, Molla and Treglown \cite{TreglownEtAl2025} presented some results regarding $H$-tiling where $H$ refers to some digraphs with fewer arcs than $\mathbb{T}_r$. 
	This can sometimes enable the achievement of a perfect tiling under much smaller conditions of degree.
	\begin{question}\label{que-end3}
		Let $D$ be an $n$-vertex digraph with $\alpha(D)=o(1)n$ (or $\alpha_2(D)=o(1)n$) and $H$ be a digraph such that $|H|\mid n$. What is the minimum semi-degree $\delta^0(D)$ (or minimum degree $\delta(D)$) condition on $D$ that guarantees a perfect $H$-tiling in $D$?
	\end{question}

\end{document}